\documentclass[12pt, reqno]{amsart}
\usepackage{amsmath}
\usepackage{amssymb}
\usepackage{amsfonts}
\usepackage{amsthm}
\usepackage{amsbsy}
\usepackage{geometry}
\usepackage{enumitem}
\usepackage{mathrsfs}
\usepackage[utf8]{inputenc}
\usepackage{mathtools}
\usepackage{hyperref}
\usepackage[super]{nth}

\usepackage{comment}
\usepackage[colorinlistoftodos,textsize=footnotesize,textwidth=15ex]{todonotes}

\newtheorem{thmintro}{Theorem}

\theoremstyle{definition}
\newtheorem{rkintro}{Remark}

\theoremstyle{plain}
\newtheorem{theorem}{Theorem}[section]

\newtheorem{lemma}[theorem]{Lemma}
\newtheorem{corollary}[theorem]{Corollary}

\theoremstyle{definition}

\newtheorem{example}[theorem]{Example}
\newtheorem{remark}[theorem]{Remark}

\numberwithin{equation}{section}

\newcommand{\triv}{\{1\}}
\renewcommand{\phi}{\varphi}
\renewcommand{\epsilon}{\varepsilon}
\newcommand{\NN}{\mathbb{N}}

\newcommand{\RR}{\mathbb{R}}
\newcommand{\FF}{\mathbb{F}}

\newcommand{\mc}[1]{\mathcal{#1}}

\newcommand{\lin}{\mathrm{lin}}
\newcommand{\halfspace}{\operatorname{hs}}
\newcommand{\co}{\colon\thinspace}

\newcommand{\gen}[1]{\left\langle #1 \right\rangle}

\newcommand{\inv}{^{-1}}

\DeclareMathOperator{\GL}{GL}

\begin{document}
	
\title{Non-linearizable Root Group Data}

\author{Sebastian \textsc{Bischof}}
\email{sebastian.bischof@math.uni-paderborn.de}
\address{Universität Paderborn, Institut für Mathematik, 33098 Paderborn, Germany}

\subjclass[2020]{20E42, 20F55}

\keywords{RGD~systems, commutation relations, root groups, root systems}

\thanks{This work was funded by the Deutsche Forschungsgemeinschaft (DFG, German Research Foundation) -- 536272274 (Walter Benjamin Programme).}

\begin{abstract}
	An RGD~system $\mathcal{D}$ is called \emph{linear w.r.t.\ a root basis $\mathcal{B}$} if the commutation relations between the root groups of $\mathcal{D}$ are `linear' in a certain sense. Moreover, $\mathcal{D}$ is called \emph{linearizable}, if there exists a root basis $\mathcal{B}$ such that $\mathcal{D}$ is linear w.r.t.\ $\mathcal{B}$. For many examples of RGD~systems it is easy to see that they are linear w.r.t.\ a concrete root basis. To the best of our knowledge, it was unclear whether RGD~systems exist which are not linearizable.
	
	In this article we show that there exist uncountably many RGD~systems which are not linearizable. In particular, we provide the first explicit example of such an RGD~system. This expands the quote from R\'{e}my that axiom (RGD$1$)$_{\mathrm{lin}}$ is not only a strengthening of axiom (RGD$1$), but is in fact stronger than it. We show that non-linearizability appears in examples of universal type, and also in examples of $2$-spherical type. For the examples of universal type we construct an uncountable family of non-linearizable RGD~systems, and for the examples of $2$-spherical type we show that the RGD~systems of type $(4, 4, 4)$ recently constructed by the author provide uncountably many non-linearizable RGD~systems.
\end{abstract}

\maketitle

\section{Introduction}

Let $(W, S)$ be a Coxeter system and denote for all $s, t \in S$ the order of $st$ in $W$ by $m_{st}$. We can associate a root system $\Phi := \Phi(W, S)$ to $(W, S)$ on which $W$ acts in a natural way. Any root is either positive or negative, and we denote by $\Phi_{\pm}$ the set of all positive/negative roots. A pair of roots $\{ \alpha, \beta \} \subseteq \Phi$ is \emph{prenilpotent} if there exist $w, w' \in W$ such that
\[ \{ w.\alpha, w.\beta \} \subseteq \Phi_+ \qquad\text{and}\qquad \{ w'.\alpha, w'.\beta \} \subseteq \Phi_-. \]
For any prenilpotent pair $\{\alpha, \beta\} \subseteq \Phi$ one can define a \emph{geometric} interval as follows:
\[ [\alpha, \beta] := \bigcap_{\substack{w\in W \\ \epsilon \in \{+, -\}}} \{ \gamma \in \Phi \mid \{ w.\alpha, w.\beta \} \subseteq \Phi_{\epsilon} \Rightarrow w.\gamma \in \Phi_{\epsilon}  \}. \]
In addition, we define $(\alpha, \beta) := [\alpha, \beta] \setminus \{\alpha, \beta\}$. Roots can be viewed as half-spaces in the Cayley graph of $(W, S)$ -- this root system will be denoted by $\Phi(W, S)$ -- or as vectors in a real vector space by using \emph{reduced root bases $\mc{B}$ associated with $(W, S)$} -- one can associate a natural root system $\Phi(\mc{B})$ to the reduced root basis $\mc{B}$. In the latter case, $W$ acts (faithfully) on the real vector space, and there is a $W$-equivariant bijection $\phi_{\mc{B}}\co \Phi(W, S) \to \Phi(\mc{B})$ between $\Phi(W, S)$ and $\Phi(\mc{B})$. Typical examples of reduced root bases are $\mc{B}(W, S)$ -- the root basis provided by the canonical linear representation of $(W, S)$ -- and $\mc{B}(A)$ -- the root basis provided by the set of real roots of a generalized Cartan matrix $A$. Let $\mc{B}$ be a reduced root basis which is associated with $(W, S)$, and let $\phi\co \Phi(W, S) \to \Phi(\mc{B})$ denote the $W$-equivariant bijection. Then one can also define an \emph{algebraic} interval w.r.t.\ $\mc{B}$ of the prenilpotent pair $\{\alpha, \beta\} \subseteq \Phi(W, S)$ by
\[ [\alpha, \beta]_{\lin}^{\mc{B}} := \phi\inv\left( \RR_{\geq 0} \phi(\alpha) + \RR_{\geq 0} \phi(\beta) \right) \cap \Phi(W, S) \quad\text{as well as}\quad (\alpha, \beta)_{\lin}^{\mc{B}} := [\alpha, \beta]_{\lin}^{\mc{B}} \setminus \{\alpha, \beta\}. \]
It is not hard to see that $[\alpha, \beta]_{\lin}^{\mc{B}}$ is always contained in $[\alpha, \beta]$. It was observed by Tits that this inclusion may be proper (see \cite[(I.1.3) in the academic year $1988/89$]{Ti73-00} and also \cite[$\S$5.4.2]{Re03}). At this point we should note that it is not clear to us whether the set $[\alpha, \beta]_{\lin}^{\mc{B}}$ is independent of the root basis $\mc{B}$ (see also Remark~\ref{Remark: independent intervals}).

In \cite{Ti92}, Tits introduced RGD~systems in order to study Kac--Moody groups over fields. An \emph{RGD~system of type $(W, S)$} is a pair $\mc{D} = (G, (U_{\alpha})_{\alpha \in \Phi(W, S)})$ consisting of a group $G$ and a family of subgroups $U_{\alpha}$ called \emph{root groups} indexed by the set of roots $\Phi(W, S)$ satisfying some axioms. The most important axiom makes an assumption about the commutation relations between root groups corresponding to prenilpotent pairs $\{\alpha, \beta\} \subseteq \Phi(W, S)$ with $\alpha \neq \beta$:
\begin{equation}\label{Equation: RGD1}
	[U_{\alpha}, U_{\beta}] \leq \langle U_{\gamma} \mid \gamma \in (\alpha, \beta) \rangle. \tag*{(RGD1)}
\end{equation}
The RGD~system $\mc{D}$ is called \emph{linear w.r.t.\ a reduced root basis $\mc{B}$} if for each prenilpotent pair $\{\alpha, \beta\} \subseteq \Phi(W, S)$ with $\alpha \neq \beta$ the following holds:\footnote{In \cite{Re03}, it was already mentioned by Rémy that \ref{Equation: RGD1lin} is a strengthening of \ref{Equation: RGD1}. To see that \ref{Equation: RGD1lin} is indeed stronger than \ref{Equation: RGD1}, one has to find an explicit example of an RGD~system which does not satisfy \ref{Equation: RGD1lin}. This is the focus of the present article.}
\begin{equation}\label{Equation: RGD1lin}
	[U_{\alpha}, U_{\beta}] \leq \langle U_{\gamma} \mid \gamma \in (\alpha, \beta)_{\lin}^{\mc{B}} \rangle. \tag*{(RGD$1$)$_{\lin}$}
\end{equation}
The RGD~system $\mc{D}$ is said to be \emph{linearizable} if there exists a reduced root basis $\mc{B}$ such that $\mc{D}$ is linear w.r.t.\ $\mc{B}$. Rémy established in \cite[6.2.2]{Re03} the existence of a Levi-decomposition for parabolic subgroups of linear RGD~systems (w.r.t.\ a reduced root basis) generalizing results from `classical' RGD~systems (i.e.\ RGD~systems of \emph{spherical} type, that is, $W$ is finite). Hence, the structure of linearizable RGD~systems is closer to the structure of classical examples. This is not surprising, as for those, the commutation relations do not just have a geometric interpretation, but also an algebraic one. However, up until now and to the best of our knowledge it was not clear whether non-linearizable RGD~systems exist.

For a root $\alpha \in \Phi(W, S)$, we denote by $r_{\alpha}$ the unique reflection which interchanges $\alpha$ and its opposite root. Let us mention that if the order $o(r_{\alpha} r_{\beta})$ is finite for the prenilpotent pair $\{\alpha, \beta\}$, then $[\alpha, \beta]_{\lin}^{\mc{B}} = [\alpha, \beta]$ for any reduced root basis $\mc{B}$ associated with $(W, S)$ (see Lemma~\ref{Lemma: rank 2 intervals coindice}). In particular, an RGD~system is linearizable if and only if there exists a reduced root basis $\mc{B}$ such that \ref{Equation: RGD1lin} holds for all prenilpotent pairs $\{ \alpha, \beta \} \subseteq \Phi(W, S)$ with $o(r_{\alpha} r_{\beta}) = \infty$. As a consequence, all classical RGD~systems are linearizable. In addition, many RGD~systems can easily be seen to be linearizable (see below). However, the general situation is very different, as shown by our main result.

\begin{thmintro}\label{Thmintro: Existence of non-linearizable examples}
	There exist uncountably many RGD~systems which are not linearizable.
\end{thmintro}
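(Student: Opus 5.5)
The plan is to build RGD~systems of universal type (all $m_{st}=\infty$) in which, for a suitable prenilpotent pair $\{\alpha,\beta\}$ with $o(r_\alpha r_\beta)=\infty$, the commutator $[U_\alpha,U_\beta]$ is forced to involve a root group $U_\gamma$ with $\gamma\in(\alpha,\beta)\setminus(\alpha,\beta)_{\lin}^{\mc{B}}$ for \emph{every} reduced root basis $\mc{B}$. Since the abstract says the universal-type family is constructed in this paper, I would carry out that construction rather than lean on the $(4,4,4)$ examples.

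\textbf{Step 1: a combinatorial obstruction that works for all root bases.} Fix a Coxeter system of universal type, say rank $3$ or $4$. I would find roots $\alpha,\beta,\gamma$ with $\gamma\in(\alpha,\beta)$ such that $\phi_{\mc{B}}(\gamma)\notin\RR_{\ge0}\phi_{\mc{B}}(\alpha)+\RR_{\ge0}\phi_{\mc{B}}(\beta)$ for every reduced root basis $\mc{B}$. In universal type, Tits' example of a proper inclusion $[\alpha,\beta]^{\mc{B}}_{\lin}\subsetneq[\alpha,\beta]$ gives the candidates. To make the choice independent of $\mc{B}$, write $\phi(\gamma)=w.e_s$ in terms of the basis vectors and a matrix of parameters $b_{st}\le -1$ (the possible reduced root bases for $m_{st}=\infty$). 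Then show that $\phi(\gamma)$ lies in the span of $\phi(\alpha)$ and $\phi(\beta)$ only if it lies in the plane spanned by two simple roots, which fails for roots involving a third generator. A convenient choice is for $\alpha$ and $\beta$ to lie in a rank-$2$ residue $\{s,t\}$ while $\gamma$ involves a reflection $u\notin\{s,t\}$; positive independence of the basis vectors then gives the claim at once.

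\textbf{Step 2: construct RGD~systems with the forbidden commutator.} Following the amalgamation and free-product approach, which is available in universal type because the only relations are for prenilpotent pairs, define $G$ by generators $U_\delta$ (for example all $\cong \FF_2$ or $\FF_q$) and relations. These relations are the rank-$1$ data, the commutators of prenilpotent pairs in the $W$-orbit of $\{\alpha,\beta\}$ given by $[u_\alpha,u_\beta]=u_\gamma$ and transported $W$-equivariantly, and trivial commutators otherwise. I would then verify the RGD axioms. (RGD1) holds since $\gamma\in(\alpha,\beta)$. The hard axiom is (RGD3), $U_{-s}\not\le U_+$, together with nondegeneracy, that is $U_\gamma\neq1$. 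I would prove these by building a twin building, or an action on a tree-like chamber system, on which the group acts compatibly. Alternatively, I would check that the relations define a consistent nilpotent-like structure on each $U_w$ and then use the standard criterion that a blueprint-type amalgam of the rank-$2$ data, trivially universal here, yields an RGD~system.

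\textbf{Step 3: non-linearizability and uncountability.} Suppose $\mc{D}$ is linear w.r.t.\ some $\mc{B}$. Then $[U_\alpha,U_\beta]\le\langle U_\delta\mid\delta\in(\alpha,\beta)_{\lin}^{\mc{B}}\rangle$. Using the product decomposition of $U_+$ (uniqueness of the expression in $\prod U_\delta$ over a suitable ordering of roots), the nontrivial $U_\gamma$-component of $[u_\alpha,u_\beta]$ contradicts $\gamma\notin(\alpha,\beta)^{\mc{B}}_{\lin}$ from Step 1. For uncountably many examples, vary the data over a parameter with continuum many values: choose the root groups as $\FF_q$ over infinitely many fields, or, better, choose for each $W$-orbit of suitable pairs independently whether to impose the twisted commutator. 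There are infinitely many such orbits, which gives $2^{\aleph_0}$ choices, and these are distinguished up to isomorphism by which commutators are trivial.

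The main obstacle is Step 2: showing that the presented group really is an RGD~system, in particular that root groups do not collapse and that (RGD3) holds. I would first try to realise the system by an explicit action on a twin building constructed combinatorially, by extending via local data on rank-$2$ residues, which is free in universal type.
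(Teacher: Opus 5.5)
Your overall architecture matches the paper's Section~3: a $\mc{B}$-independent obstruction, a construction of $\mc{D}$, the unique product decomposition, and independent choices indexed by $K\subseteq\NN$. However, the concrete obstruction you propose in Step~1 cannot exist. If $\alpha,\beta$ are, up to a $W$-translate, roots of an $\{s,t\}$-residue, then Lemma~\ref{Lemma: rank 2 intervals coindice} gives $[\alpha,\beta]_{\lin}^{\mc{B}}=[\alpha,\beta]$ for every reduced $\mc{B}$. Concretely, in universal type the Cayley graph is a tree and such a pair is nested. The roots of $(\alpha,\beta)$ are exactly those whose walls are the $s$- and $t$-labelled edges on the path joining the two walls, so no root involving a third generator $u$ lies in $(\alpha,\beta)$. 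Your Step~2 assertion that (RGD1) holds because $\gamma\in(\alpha,\beta)$ is therefore false for this choice: imposing $[u_\alpha,u_\beta]=u_\gamma$ would violate (RGD1) itself.

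The roles must be reversed: the intermediate root lies in the $\{s,t\}$-plane and one member of the pair leaves it. The paper uses $\{-\alpha_r,\alpha_n\}$ with $\alpha_n=(st)^ns\alpha_t$ and $\gamma=\alpha_s$. Here $-\alpha_r\subseteq\alpha_s\subseteq\alpha_n$, so $\alpha_s\in(-\alpha_r,\alpha_n)$ by Remark~\ref{Remark: intervals}. On the other hand, $\phi(\alpha_n)=\lambda_n\phi(\alpha_s)+\mu_n\phi(\alpha_t)$ with $\mu_n\neq0$ keeps $\phi(\alpha_s)$ out of $\RR_{\ge0}\phi(-\alpha_r)+\RR_{\ge0}\phi(\alpha_n)$. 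Consistently with Lemma~\ref{Lemma: rank 2 intervals coindice}, this pair lies in no rank-$2$ residue, since the path joining its walls carries all three labels $r,s,t$.

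Second, Step~2 is only a plan, and it is where the substance of the theorem lies. A presentation by generators and relations leaves (RGD3) and the non-triviality of $U_{\alpha_s}$ unproved. Rank-$2$ data carry no information about $\{-\alpha_r,\alpha_n\}$, precisely because the pair is invisible to every rank-$2$ residue, so an amalgam of rank-$2$ pieces cannot be used to impose or verify the relation you need.

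The paper fills this step with a commutator blueprint. It sets $M^H_{\alpha,\beta}=\{\beta_2\}$ when $\alpha,\beta$ are joined by a minimal gallery of type $(r,s,t,\ldots,s,t)$ with $n+1$ occurrences each of $s$ and $t$ and $n\in K$, and $M^H_{\alpha,\beta}=\emptyset$ otherwise. It then checks the blueprint conditions, Weyl-invariance and faithfulness, and applies the author's integrability theorem from earlier work to obtain $\mc{D}(K)$ over $\FF_2$. Alternatively, the type $(4,4,4)$ examples give the statement directly. The same machinery guarantees that the choices for all $K\subseteq\NN$ are simultaneously consistent, which is what yields uncountably many examples; your fallback of varying finite fields $\FF_q$ gives only countably many.
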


Following \cite{Bi22}, an RGD~system $\mc{D} = (G, (U_{\alpha})_{\alpha \in \Phi(W, S)})$ of type $(W, S)$ satisfies condition (nc) if for each prenilpotent pair $\{\alpha, \beta\} \subseteq \Phi(W, S)$ with $o(r_{\alpha} r_{\beta}) = \infty$ the root groups $U_{\alpha}$ and $U_{\beta}$ commute. In particular, such RGD~systems are linearizable (see Lemma~\ref{Lemma: nc implies linearizable}). In many examples of RGD~systems it is known that they are linear w.r.t.\ a concrete root basis:
\begin{enumerate}[label=(\alph*)]
	\item Suppose that $(W, S)$ is simply-laced, that is, $m_{st} \in \{2, 3\}$ for all distinct $s, t \in S$. Then $\mc{D}$ satisfies (nc) (see \cite[Theorem~C]{Bi22} and also \cite[Lemma~6]{AC16}).
	
	\item\label{Item: Complete diagram implies nc} Suppose that for all distinct $s, t \in S$ we have $m_{st} \notin \{2, \infty\}$, and that the root groups are not too small (e.g.\ for each root $\alpha \in \Phi(W, S)$ we have $\vert U_{\alpha} \vert \geq 4$). Then $\mc{D}$ satisfies (nc) (see \cite[Theorem~C]{Bi22}).
	
	\item All \emph{exotic} RGD~systems of right-angled type (i.e.\ $m_{st} \in \{2, \infty\}$ for all distinct $s, t \in S$) constructed in \cite{RR06} by Rémy and Ronan satisfy (nc).
	
	\item Let $A$ be a generalized Cartan matrix and suppose that $\mc{D}$ is the RGD~system of a Kac--Moody group of type $A$ over a field. Then $\mc{D}$ `often' satisfies (nc) (see \cite[Remark~3.7(f)]{Ti87}). However, $\mc{D}$ is always linear w.r.t.\ $\mc{B}(A)$ by definition of the commutation relations. Moreover, if $\{\alpha, \beta\} \subseteq \Phi(W, S)$ is a prenilpotent pair with $o(r_\alpha r_\beta) = \infty$, then one can show that $[U_\alpha, U_\beta] \leq U_{\alpha + \beta}$ (see \cite[Proposition~1]{BP95} and \cite[Theorem~2]{Mo87}).
	
	\item Suppose that $(W, S)$ is of type $\tilde{A}_1$. Then one can show that $[\alpha, \beta]_{\lin}^{\mc{B}} = [\alpha, \beta]$ for all prenilpotent pairs $\{\alpha, \beta\} \subseteq \Phi(W, S)$ and any reduced root basis $\mc{B}$ associated with $(W, S)$ (see Lemma~\ref{Lemma: rank 2 intervals coindice}). In particular, the uncountably many RGD~systems of type $\tilde{A}_1$ over $\FF_2$ (i.e.\ each root group has cardinality $2$) constructed by Tits in \cite[Section~5.4 in the academic year $1995/96$]{Ti73-00} are linearizable.
	
	\item Suppose that $(W, S)$ is of affine type. Let $A$ be the corresponding generalized Cartan matrix of untwisted type from \cite[Table Aff~1]{Kac90}. Then one can show that $[\alpha, \beta]_{\lin}^{\mc{B}(A)} = [\alpha, \beta]$ for all prenilpotent pairs $\{ \alpha, \beta \} \subseteq \Phi(W, S)$ (see also \cite[Proposition~6.3~(a)]{Kac90}).
\end{enumerate}

A Coxeter system $(W, S)$ is said to be \emph{of type $(4, 4, 4)$}, if $S$ has cardinality three and $m_{st} = 4$ for all distinct $s, t \in S$. Recently (in his PhD thesis) the author constructed uncountably many examples of RGD~systems of type $(4, 4, 4)$ over $\FF_2$ with non-trivial commutation relations (see \cite{Bischof:Construction_of_commutator_blueprints}, \cite{BiUncountablymanyRGD-systems} and \cite{BiRGD}). In this article we show that these RGD~systems provide uncountably many examples of RGD~systems which are not linearizable. As a consequence, we obtain the following result which immediately implies Theorem~\ref{Thmintro: Existence of non-linearizable examples} (see Theorem~\ref{Theorem: 444 not linear wrt B(W,S)}):

\begin{thmintro}\label{Thmintro: 444}
	There exist uncountably many RGD~systems of type $(4, 4, 4)$ over $\FF_2$ which are not linearizable.
\end{thmintro}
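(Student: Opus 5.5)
Our plan is to use the family of RGD~systems of type $(4,4,4)$ over $\FF_2$ constructed in \cite{Bischof:Construction_of_commutator_blueprints}, \cite{BiUncountablymanyRGD-systems}, \cite{BiRGD}. These systems come from \emph{commutator blueprints}: for every prenilpotent pair $\{\alpha,\beta\}$ the commutator $[u_\alpha,u_\beta]$ of the non-trivial elements is prescribed as a product $\prod_{\gamma\in M_{\alpha,\beta}} u_\gamma$ over some subset $M_{\alpha,\beta}\subseteq(\alpha,\beta)$. Uncountably many pairwise non-isomorphic such systems exist, and in all of them certain pairs with $o(r_\alpha r_\beta)=\infty$ have non-trivial commutator. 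By Lemma~\ref{Lemma: rank 2 intervals coindice}, only pairs with $o(r_\alpha r_\beta)=\infty$ matter. So it suffices to find one such pair $\{\alpha,\beta\}$, present in every member of the family, and a root $\gamma\in M_{\alpha,\beta}$ with $\gamma\notin(\alpha,\beta)^{\mc B}_{\lin}$ for \emph{every} reduced root basis $\mc B$ associated with $(W,S)$. Then \ref{Equation: RGD1lin} fails for every $\mc B$. Since $U_\gamma$ is not contained in the subgroup generated by the other root groups involved (a standard consequence of the RGD axioms, via the uniqueness of the product decomposition in $U_+$), this gives non-linearizability.

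The first step is to describe all reduced root bases for type $(4,4,4)$. A reduced root basis $\{e_s\}$ is determined up to scaling by a matrix with $B(e_s,e_t)=-\cos(\pi/4)\cdot\lambda_{st}$, and its entries must satisfy $B(e_s,e_t)B(e_t,e_s)=\cos^2(\pi/4)=1/2$. After rescaling the $e_s$, the only remaining invariant is the cyclic product $c=B(e_r,e_s)B(e_s,e_t)B(e_t,e_r)$ over the triangle $r,s,t$. So the reduced root bases form a one-parameter family indexed by $c\in\RR_{>0}$, with $c=(1/\sqrt2)^3$ corresponding to $\mc B(W,S)$. I would record this as a lemma, reducing the problem to a one-parameter family of explicit vector representations.

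The second step is to select the witness. I would take a shortest pair $\{\alpha,\beta\}$ with $o(r_\alpha r_\beta)=\infty$ whose commutator in the blueprints is forced to be non-trivial; in the construction this happens near the first non-spherical residue, where a word like $w=rst$ enters. I would then compute explicitly, as vectors depending on the parameter, $\phi(\alpha)$, $\phi(\beta)$ and $\phi(\gamma)$ for the roots $\gamma\in(\alpha,\beta)$ occurring in $M_{\alpha,\beta}$. These are polynomials in $\sqrt2$ and in the scaling parameters, and up to the rescaling that the definition of $[\alpha,\beta]_{\lin}$ allows, the dependence is only on $c$.

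The main obstacle is the third step: showing that for some $\gamma\in M_{\alpha,\beta}$ the equation $\phi(\gamma)=a\phi(\alpha)+b\phi(\beta)$ with $a,b\ge0$ has no solution for any $c>0$. Since all three vectors lie in a $3$-dimensional space, this means showing that a $3\times3$ determinant, which is a Laurent polynomial in $c$, has no positive root, or that where it vanishes the coefficients $a,b$ are negative. I expect to do this by direct computation. If one fixed $\gamma$ does not work for all $c$, I would fall back on the fact that the blueprints force several roots to appear, at different pairs $\{\alpha,\beta\}$ related by the diagram symmetry permuting $r,s,t$. The cyclic product then changes as $c\mapsto c^{-1}$ (after normalization), which lets me cover the cases $c\ge c_0$ and $c\le c_0$ by two symmetric witnesses. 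Combining this with the uncountability of the family from \cite{BiUncountablymanyRGD-systems} finishes the proof. This last case split is the point I am least sure of and would have to verify carefully.
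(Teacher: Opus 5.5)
Your Step~1 is fine. For type $(4,4,4)$ the Cartan matrix is invertible, so $\Pi$ is automatically linearly independent, and positive rescalings of $\Pi$ do not change $[\alpha,\beta]^{\mc{B}}_{\lin}$. The genuine gap is at Step~2, which is where the proof actually lives: you never say which commutation relation is to be violated, and your description of it is wrong. Nothing is ``forced'' in these systems: a Kac--Moody group of type $(4,4,4)$ over $\FF_2$ gives an RGD~system satisfying (nc), hence a linearizable one. In the family the paper uses, the non-trivial relations are a free choice. For $K\subseteq\NN_{\ge3}$ there is $\mc{D}(K)$ with $[U_{\alpha_r},U_{\beta_n}]=U_\gamma U_{\gamma'}$ for $n\in K$ and $[U_{\alpha_r},U_{\beta_n}]=\{1\}$ for $n\notin K$, where $\beta_n=(rstst)^n\alpha_r$, $\gamma=rststrs\alpha_t$ and $\gamma'=rststrt\alpha_s$. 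So the witness is not a shortest pair near a word like $rst$. Also, no single pair has non-trivial commutator in every member; you would fix some $n_0$ and use the uncountably many $K\ni n_0$. Without these data Step~3 cannot even be set up. The fallback split $c\ge c_0$ versus $c\le c_0$, using witnesses at different pairs, is likewise unsupported.

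The paper avoids parametrizing root bases altogether. It uses that one relation involves \emph{two} roots with $o(r_\gamma r_{\gamma'})<\infty$. These are your ``symmetric witnesses'', but inside a single relation: the diagram automorphism $s\leftrightarrow t$ fixes $\alpha_r$ and $\beta_n$ (because $stst=tsts$) and swaps $\gamma,\gamma'$. Suppose both lay in $\RR_{\ge0}\phi(\alpha_r)+\RR_{\ge0}\phi(\beta_n)$. Either $\phi(\gamma),\phi(\gamma')$ are proportional, which reducedness and injectivity of $\phi$ forbid, or after swapping $\phi(\gamma')\in\RR_{\ge0}\phi(\alpha_r)+\RR_{\ge0}\phi(\gamma)$. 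In the latter case $\gamma'\in[\alpha_r,\gamma]$, and since $\alpha_r\subsetneq\gamma$ the half-space description of intervals gives $\gamma'\subseteq\gamma$. That is impossible because $o(r_\gamma r_{\gamma'})<\infty$.

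Your computational route does close once this witness is inserted, and it needs no case split. Write $e_x=\phi(\alpha_x)$ and normalize $A_{rs}=A_{sr}=A_{st}=A_{ts}=-\sqrt2$, $A_{tr}=-\sqrt2\lambda$, $A_{rt}=-\sqrt2/\lambda$ with $\lambda>0$. Then $rstst(e_s)=-e_s-\sqrt2e_r$ and $rstst(e_t)=-e_t-\sqrt2\lambda e_r$, so the $(s,t)$-coordinates of $\phi(\beta_n)$ are proportional to $(2\sqrt2+2/\lambda,\,2+2\sqrt2/\lambda)$ for all $n\ge1$. The $(s,t)$-coordinates of $\phi(\gamma)$ and $\phi(\gamma')$ are $(5\sqrt2+4\lambda+4/\lambda,\,7+2\sqrt2\lambda+4\sqrt2/\lambda)$ and $(7+4\sqrt2\lambda+2\sqrt2/\lambda,\,5\sqrt2+4\lambda+4/\lambda)$. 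The two resulting $2\times2$ minors are $-2/\lambda$ and $2$, both non-zero. So neither root even lies in the plane spanned by $\phi(\alpha_r)$ and $\phi(\beta_n)$. This is stronger than the paper's ``not both'', but it costs the classification of root bases and an explicit calculation that the paper's argument bypasses.
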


\begin{rkintro}
	Let $\mc{D}$ be an RGD~system of type $(4, 4, 4)$. If all root groups contain at least three elements, then $\mc{D}$ is linearizable. Even more, it is a consequence of \ref{Item: Complete diagram implies nc} that $\mc{D}$ satisfies (nc). In particular, the commutation relations are well-behaved. If all root groups have cardinality two, the situation is completely different. There exists an example (provided by any Kac--Moody group of type $(4, 4, 4)$ over $\FF_2$) which satisfies (nc), hence, which is linearizable. But by Theorem~\ref{Thmintro: 444} there are also uncountably many examples which are not linearizable.
\end{rkintro}

\begin{rkintro}
	In general, the commutation relations of RGD~systems of $2$-spherical type (i.e.\ $m_{st} < \infty$ for all $s, t \in S$) are determined by their `local' commutation relations if the root groups are not too small (see \cite{AM97}). This implies that the commutation relations are well-behaved for almost all $2$-spherical RGD~systems. Hence, the existence of non-linearizable RGD~systems of $2$-spherical type is surprising.
\end{rkintro}

We also construct uncountably many RGD~systems of universal type (i.e.\ $m_{st} = \infty$ for all distinct $s, t \in S$) which are not linearizable. The construction is done by constructing certain commutator blueprints\footnote{Commutator blueprints were introduced in \cite{BiRGD} as purely combinatorial objects in order to construct new RGD~systems.}, and the non-linearizability is based on Rémy's example of $[\alpha, \beta]_{\lin}^{\mc{B}} \subsetneq [\alpha, \beta]$ (see \cite[p.\ 130]{Re03}; this is essentially Lemma~\ref{Lemma: Extension Rémy} in the case $n=0$). The following result is Corollary~\ref{Corollary: uncountably many non-linearizable}.

\begin{thmintro}\label{Thmintro: not inearizable}
	There exist uncountably many RGD~systems of universal type which are not linearizable.
\end{thmintro}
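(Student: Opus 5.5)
The plan is to work in the universal Coxeter system $(W,S)$ with $S=\{r,s,t\}$ (all $m_{st}=\infty$) and to combine three ingredients: Rémy's geometric example, extended to a family (Lemma~\ref{Lemma: Extension Rémy}); the construction of RGD~systems from commutator blueprints \cite{BiRGD}; and a rigidity statement showing that the algebraic intervals do not depend on the choice of reduced root basis in the relevant situations.

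First, I fix a prenilpotent pair $\{\alpha,\beta\}$ with $o(r_\alpha r_\beta)=\infty$ together with a root $\gamma\in(\alpha,\beta)$ such that $\gamma\notin(\alpha,\beta)_{\lin}^{\mc{B}}$ for every reduced root basis $\mc{B}$ associated with $(W,S)$. In Rémy's example the failure is checked only for $\mc{B}(W,S)$. To cover all bases, I would use that a reduced root basis of universal type is determined by parameters $\langle\alpha_s,\alpha_t\rangle\le -1$ (up to rescaling). I would then show that whether $\phi_{\mc{B}}(\gamma)\in\RR_{\ge0}\phi_{\mc{B}}(\alpha)+\RR_{\ge0}\phi_{\mc{B}}(\beta)$ holds reduces to an explicit linear-algebra condition in these parameters which fails throughout the whole range. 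I expect Lemma~\ref{Lemma: Extension Rémy} to provide such pairs $(\alpha_n,\beta_n,\gamma_n)$ for every $n\ge0$, all of them pairwise "independent", meaning they lie in disjoint regions of the Cayley graph or are not $W$-conjugate.

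Second, I construct commutator blueprints of universal type over $\FF_2$. In these, the commutator $[u_{\alpha_n},u_{\beta_n}]$ is prescribed as a product over a subset of $(\alpha_n,\beta_n)$ that contains $\gamma_n$ whenever $n\in N$, for an arbitrary subset $N\subseteq\NN$, and is trivial otherwise. Using the integrability criterion from \cite{BiRGD}, which for universal type should reduce to checking compatibility conditions on the rank-$3$ (or residue-level) data, each such blueprint yields an RGD~system $\mc{D}_N$. Whenever $N\neq\emptyset$, $\mc{D}_N$ violates \ref{Equation: RGD1lin} for every $\mc{B}$. The reason is that the commutator is a nontrivial element whose normal form in $U_+$ involves $u_{\gamma_n}$, and by uniqueness of normal forms it does not lie in $\langle U_\delta\mid\delta\in(\alpha_n,\beta_n)_{\lin}^{\mc{B}}\rangle$. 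This requires a small argument that the subgroup generated by the root groups of a nilpotent set of roots is the product of those root groups in any order.

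Third, I show that uncountably many of these systems are pairwise non-isomorphic. The subsets $N\subseteq\NN$ are uncountable, and I would distinguish $\mc{D}_N$ from $\mc{D}_{N'}$ by the commutation data, which is invariant under isomorphisms of RGD~systems up to the (countable) automorphism group of $(W,S)$. Quotienting an uncountable set by countable-to-one identifications still leaves uncountably many classes. The main obstacle is the first step: proving that $\gamma_n\notin(\alpha_n,\beta_n)_{\lin}^{\mc{B}}$ for all reduced root bases, and not only for $\mc{B}(W,S)$, since the independence of the intervals from $\mc{B}$ is unclear. I would attack it by writing the three roots explicitly as vectors in terms of the basis parameters and verifying the sign pattern directly.
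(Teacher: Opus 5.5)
Your overall architecture matches the paper's. You take a family of prenilpotent pairs, each with a root in the geometric but not the algebraic interval for every reduced root basis. For each $K\subseteq\NN$ you build a Weyl-invariant commutator blueprint over $\FF_2$ and integrate it via \cite[Theorem~A]{BiRGD}. Uncountability then comes from the subsets of $\NN$. Two of your details are improvements the paper leaves implicit: you exclude $N=\emptyset$, and you count modulo the countably many automorphisms of the Coxeter complex.

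The gap is in the step you yourself call the main obstacle. Parametrizing reduced root bases of universal type by symmetric values $\langle\alpha_s,\alpha_t\rangle\le-1$ up to rescaling does not cover all of them. (RB2) only requires $A_{\alpha,\beta},A_{\beta,\alpha}<0$ with $A_{\alpha,\beta}A_{\beta,\alpha}\ge4$. Take a non-symmetrizable generalized Cartan matrix $A$ of universal type, e.g.\ $a_{rs}=a_{st}=a_{tr}=-1$ and $a_{sr}=a_{ts}=a_{rt}=-4$; no rescaling brings $\mathcal{B}(A)$ into your form. So a sign check over your parameter range would not prove the claim for every $\mathcal{B}$. In any case, you never fix the triple $(\alpha,\beta,\gamma)$ to which the check is applied.

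The missing idea is that no computation is needed once two of the three roots lie in a rank-$2$ plane that the third avoids. The paper takes $\alpha=-\alpha_r$, $\beta=\alpha_n=(st)^ns\alpha_t$ and $\gamma=\alpha_s$. Since $s,t$ preserve $P=\RR\phi(\alpha_s)+\RR\phi(\alpha_t)$, one has $\phi(\alpha_n)=\lambda_n\phi(\alpha_s)+\mu_n\phi(\alpha_t)$ with $\mu_n\neq0$. On the other hand $\phi(\alpha_r)\notin P$: all off-diagonal pairings are negative in universal type, and together with (RB3) this forces the three simple roots to be linearly independent. Hence $-a\phi(\alpha_r)+b\phi(\alpha_n)$ with $a,b\ge0$ can equal $\phi(\alpha_s)$ only if $a=0$, which is impossible. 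This is Lemma~\ref{Lemma: Extension Rémy}, and it holds for all reduced $\mathcal{B}$ at once.

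The pairs are pairwise non-conjugate, since their minimal galleries have type $(r,s,t,\dots,s,t)$ and length $2n+3$. So the blueprint with $M_{\alpha,\beta}^H=\{\beta_2\}$ along galleries of type $(n)$, $n\in K$, and $\emptyset$ otherwise switches each $n$ on or off independently. It depends only on gallery types, hence is Weyl-invariant, and it is faithful. That, rather than a rank-$3$ compatibility check, is what the paper feeds into \cite[Theorem~A]{BiRGD}.

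Every root of $(-\alpha_r,\alpha_n)$ has the form $\pm w\alpha_u$ with $w\in\gen{s,t}$ and $u\in\{s,t\}$, so it maps into $P$. The same argument therefore gives $(-\alpha_r,\alpha_n)_{\lin}^{\mathcal{B}}=\emptyset$, and linearity would force $[U_{-\alpha_r},U_{\alpha_n}]=1$ outright. Your normal-form detour is thus unnecessary. The auxiliary claim you planned for it (``product in any order'' for any nilpotent set) is also false without closedness. In $\mathcal{D}(K)$ with $n\in K$, the element $u_{\alpha_s}=[u_{-\alpha_r},u_{\alpha_n}]$ lies in $\gen{U_{-\alpha_r},U_{\alpha_n}}$ but not in $U_{-\alpha_r}U_{\alpha_n}$.
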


\subsection*{Overview}

In Section~2 we recall the basics of Coxeter systems, root bases, and RGD~systems. In addition, we prove some useful results about them. Section~3 is devoted to the proof of Theorem~\ref{Thmintro: not inearizable}. We will construct uncountably many RGD~systems and show that they are not linearizable. In Section~4 we prove Theorem~\ref{Thmintro: 444} by recalling the existence of the RGD~systems of type $(4, 4, 4)$ over $\FF_2$ recently constructed by the author and prove that they are not linearizable.

\section{Preliminaries}\label{Section: Preliminaries}

\subsection*{Coxeter systems}

Let $(W, S)$ be a Coxeter system and denote by $\ell\co W \to \NN$ its length function. For all $s, t \in S$ we denote by $m_{st}$ the order of the product $st$ in $W$. The \emph{rank} of $(W, S)$ is the cardinality of $S$. The Coxeter system $(W, S)$ is said to be of \emph{universal type} if $m_{st} = \infty$ for all distinct $s, t \in S$. Moreover, $(W, S)$ is of \emph{type $(4, 4, 4)$} if $(W, S)$ has rank three and $m_{st} = 4$ for all distinct $s, t \in S$. It is well known that for each $J \subseteq S$ the pair $(\gen{J}, J)$ is a Coxeter system (see \cite[Chapter~IV, Section~1, Theorem~2]{Bo02}).

\subsection*{The chamber system $\mathbf{\Sigma(W, S)}$}

Let $(W, S)$ be a Coxeter system. Defining $w \sim_s w'$ if and only if $w\inv w' \in \gen{s}$, we obtain a chamber system with chamber set $W$ and equivalence relations $\sim_s$ for $s\in S$, which we denote by $\Sigma(W, S)$. We call two chambers $w, w'$ \emph{$s$-adjacent} if $w \sim_s w'$, and \emph{adjacent} if they are $s$-adjacent for some $s\in S$. A \emph{gallery of length $n$} from $w_0$ to $w_n$ is a sequence $(w_0, \ldots, w_n)$ of chambers where $w_i$ and $w_{i+1}$ are adjacent for all $0 \leq i < n$. A gallery $(w_0, \ldots, w_n)$ is called \emph{minimal} if there exists no gallery from $w_0$ to $w_n$ of length $k<n$. Let $G = (w_0, \ldots, w_n)$ be a minimal gallery. Then, $s_i := w_{i-1}\inv w_i \in S$ for all $1 \leq i \leq n$, and we call $(s_1, \ldots, s_n)$ the \emph{type} of $G$.

\subsection*{Roots of Coxeter systems viewed as half-spaces}

Let $(W, S)$ be a Coxeter system. For $s\in S$ we define the \emph{simple root corresponding to $s\in S$} by $\alpha_s = \{ w\in W \mid \ell(sw) > \ell(w) \}$. A \emph{root} of $(W, S)$ is a translate $w\alpha_s$ for some $w\in W$ and $s\in S$. We denote the set of all roots by $\Phi(W, S)$. A root is called \emph{positive}, if it contains $1_W$; otherwise it is called \emph{negative}. We denote the set of all positive/negative roots by $\Phi(W, S)_{\pm}$. A pair of roots $\{ \alpha, \beta \} \subseteq \Phi(W, S)$ is said to be \emph{prenilpotent} if there exist $w, w' \in W$ such that $\{w\alpha, w\beta\} \subseteq \Phi(W, S)_+$ and $\{ w'\alpha, w'\beta \} \subseteq \Phi(W, S)_-$. For a prenilpotent pair $\{ \alpha, \beta \} \subseteq \Phi(W, S)$ we define the \emph{(geometric) interval}
\[ [\alpha, \beta] := \bigcap_{\substack{w\in W \\ \epsilon \in \{+, -\}}} \{ \gamma \in \Phi(W, S) \mid \{ w\alpha, w\beta \} \subseteq \Phi(W, S)_{\epsilon} \Rightarrow w\gamma \in \Phi(W, S)_{\epsilon}  \} \]
as well as $(\alpha, \beta) := [\alpha, \beta] \setminus \{ \alpha, \beta \}$. For each root $\alpha \in \Phi(W, S)$, we denote its opposite root by $-\alpha := W \setminus \alpha \in \Phi(W, S)$ and we denote the unique reflection that interchanges these two roots by $r_{\alpha}$. For a minimal gallery $G = (w_0, \ldots, w_k)$, the \emph{sequence of roots crossed by $G$} is the unique sequence of roots $(\alpha_1, \ldots, \alpha_k)$ such that $w_{i-1} \in \alpha_i$ and $w_i \notin \alpha_i$ for all $1 \leq i \leq k$. In this case we say that $G$ is a minimal gallery \emph{between} $\alpha_1$ and $\alpha_k$. For $w\in W$ we define $\Phi(W, S)_w := \{ \alpha \in \Phi_+ \mid w \notin \alpha \}$. Note that this set is finite. Indeed, let $G$ be a minimal gallery from $1_W$ to $w$ and let $(\alpha_1, \ldots, \alpha_k)$ be the sequence of roots crossed by $G$. Then $\Phi(W, S)_w = \{ \alpha_1, \ldots, \alpha_k\}$.

\begin{remark}\label{Remark: intervals}
	Let $\{ \alpha, \beta \} \subseteq \Phi(W, S)$ be a prenilpotent pair. It follows right from the definition of $[\alpha, \beta]$ that $[\alpha, \beta] = \{ \gamma \in \Phi(W, S) \mid \alpha \cap \beta \subseteq \gamma \text{ and } (-\alpha) \cap (-\beta) \subseteq (-\gamma) \}$.
\end{remark}

\begin{lemma}\label{Lemma: rank 2 intervals}
	Suppose that $(W, S)$ is of rank $2$. Let $G$ be a minimal gallery from $1_W$ to some $w\in W$ and let $(\alpha_1, \ldots, \alpha_k)$ be the sequence of roots crossed by $G$. Then $[\alpha_i, \alpha_j] = \{ \alpha_i, \ldots, \alpha_j \}$ for all $1 \leq i \leq j \leq k$.
\end{lemma}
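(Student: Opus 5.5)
We work throughout with Remark~\ref{Remark: intervals}: $[\alpha_i,\alpha_j]$ consists of all roots $\gamma$ with $\alpha_i\cap\alpha_j\subseteq\gamma$ and $(-\alpha_i)\cap(-\alpha_j)\subseteq -\gamma$. Write $S=\{s,t\}$ and $G=(w_0=1_W,\ldots,w_k=w)$ with type $(s_1,\ldots,s_k)$. Since the rank is $2$, the type alternates between $s$ and $t$.

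First we simplify the situation. The subgallery $(w_{i-1},\ldots,w_j)$ is minimal and crosses exactly $\alpha_i,\ldots,\alpha_j$. Translating by $w_{i-1}^{-1}$ is harmless, because $W$ acts on $\Phi(W,S)$ preserving intersections and complements, so the characterization of intervals is $W$-equivariant. Hence we may assume $i=1$ and $w_0=1_W$, and we must show $[\alpha_1,\alpha_j]=\{\alpha_1,\ldots,\alpha_j\}$, where $(w_0,\ldots,w_j)$ is a minimal alternating gallery from $1_W$.

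\textbf{Inclusion $\supseteq$.} We use that in rank $2$ the Cayley graph is a cycle (finite dihedral case) or a line (infinite dihedral case), and each root is a "half" of it. Along the gallery, $\alpha_1\cap\alpha_j$ contains $w_0$ and everything on the far side of $w_0$. Each intermediate wall $\alpha_m$ with $1\le m\le j$ separates $w_{m-1}$ from $w_m$ and lies between the walls of $\alpha_1$ and $\alpha_j$.

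Concretely, for $\gamma=\alpha_m$ and $x\in\alpha_1\cap\alpha_j$: every minimal gallery from $x$ to $w_j$ passes through $w_0$, or the roots are arranged monotonically. In the cycle or line picture, $\alpha_1\cap\alpha_j$ is the arc "behind" $w_0$, and $(-\alpha_1)\cap(-\alpha_j)$ is the arc "beyond" $w_j$. We make this precise by parametrizing $W$ as chambers $\ldots,c_{-1},c_0=1,c_1,\ldots$ (indices mod $2m_{st}$ if finite), with $c_\ell=w_\ell$ for $0\le\ell\le j$. The root $\alpha_m$ is then the set $\{c_\ell:\ell<m\}$, or in the finite case the half-cycle $\{c_{m-m_{st}},\ldots,c_{m-1}\}$. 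With this description the inclusions can be checked directly.

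\textbf{Inclusion $\subseteq$.} Conversely, let $\gamma\in[\alpha_1,\alpha_j]$. Every root of the form $\{c_\ell:\ell<m\}$ or $\{c_\ell:\ell\ge m\}$ (cyclically in the finite case) is determined by its boundary edge.
\begin{itemize}
\item The condition $w_0\in\alpha_1\cap\alpha_j\subseteq\gamma$ forces $\gamma$ to contain $w_0$.
\item The condition $w_j\in(-\alpha_1)\cap(-\alpha_j)\subseteq -\gamma$ forces $\gamma$ not to contain $w_j$.
\end{itemize}
So the wall of $\gamma$ separates $w_0$ from $w_j$, and therefore $\gamma\in\Phi(W,S)_{w_j}=\{\alpha_1,\ldots,\alpha_j\}$, using the fact recalled above that $\Phi(W,S)_w$ is the set of roots crossed by a minimal gallery. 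This direction is essentially immediate.

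\textbf{Main obstacle.} The main work is the inclusion $\{\alpha_1,\ldots,\alpha_j\}\subseteq[\alpha_1,\alpha_j]$, specifically the finite dihedral case. There we must check that $\alpha_1\cap\alpha_j\subseteq\alpha_m$ using $j\le k\le m_{st}$ (which holds by minimality). In the cycle model, $\alpha_1=\{c_{1-m_{st}},\ldots,c_0\}$ and $\alpha_j=\{c_{j-m_{st}},\ldots,c_{j-1}\}$. Their intersection is $\{c_{j-m_{st}},\ldots,c_0\}$, which is contained in $\alpha_m=\{c_{m-m_{st}},\ldots,c_{m-1}\}$ for $1\le m\le j$. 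The complementary statement for negative roots follows by the same computation, or by symmetry, reversing the gallery.
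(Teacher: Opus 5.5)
Your proof is correct. The inclusion $[\alpha_i,\alpha_j]\subseteq\{\alpha_i,\ldots,\alpha_j\}$ is argued exactly as in the paper: a root of the interval contains $w_{i-1}$ but not $w_j$, so it is crossed by the subgallery. Your preliminary translation by $w_{i-1}^{-1}$ is harmless but not needed.

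The two routes differ only for the reverse inclusion. The paper splits into two cases:
\begin{itemize}
\item For $m_{st}=\infty$, it cites the nestedness $\alpha_i\subseteq\alpha_j$ for $i\le j$.
\item For $m_{st}<\infty$, it extends $G$ to a minimal gallery $H$ ending at the longest element and takes the other minimal gallery $H'$ with the same endpoints. It then derives contradictory index inequalities from a proposition in Weiss's book.
\end{itemize}
You instead model the rank-$2$ Cayley graph as a line or a $2m_{st}$-cycle, whose roots are half-lines or arcs of $m_{st}$ consecutive chambers, and you compute $\alpha_1\cap\alpha_j$ and $(-\alpha_1)\cap(-\alpha_j)$ directly. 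The underlying geometry is the same: $H$ and $H'$ together form your cycle, and the cited proposition is essentially the arc description of roots. Your version, however, is self-contained and treats both conditions of Remark~\ref{Remark: intervals} explicitly, whereas the paper only writes out the condition $\alpha_i\cap\alpha_j\subseteq\alpha_n$.

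Two things you leave implicit should be stated.
\begin{itemize}
\item The arc intersection $\{c_{j-m_{st}},\ldots,c_0\}$ is correct only because the integer intervals $[1-m_{st},0]$ and $[j-m_{st},j-1]$ have a union of size $j+m_{st}-1<2m_{st}$. Hence no extra overlap appears after reducing indices modulo $2m_{st}$. This is where $j\le m_{st}$ enters.
\item The facts $w_0\in\alpha_j$ and $w_j\notin\alpha_1$ follow from $\Phi(W,S)_{w_\ell}$ being the set of roots crossed by the initial subgallery $(w_0,\ldots,w_\ell)$.
\end{itemize}
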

\begin{proof}
	Let $\alpha \in [\alpha_i, \alpha_j]$. Then $c_{i-1} \in \alpha_i \cap \alpha_j \subseteq \alpha$ and $c_j \in (-\alpha_i) \cap (-\alpha_j) \subseteq (-\alpha)$ by \cite[Lemma~3.69]{AB08}. Moreover, \cite[Lemma~3.69]{AB08} implies $\alpha \in \{\alpha_i, \ldots, \alpha_j\}$. For the other inclusion we let $S = \{s, t\}$ and we distinguish the following two cases:
	\begin{itemize}
		\item If $m_{st} = \infty$, then $\alpha_i \subseteq \alpha_j$ (see \cite[Lemma~2.8~(a)]{Bischof:Construction_of_commutator_blueprints}) and $(-\alpha_j) \subseteq (-\alpha_i)$ for all $1 \leq i \leq j \leq k$. For $i \leq n \leq j$ we have
		\[ \alpha_i \cap \alpha_j = \alpha_i \subseteq \alpha_n \qquad\text{and}\qquad (-\alpha_i) \cap (-\alpha_j) = (-\alpha_j) \subseteq (-\alpha_n). \]
		Now Remark~\ref{Remark: intervals} shows that $\alpha_k \in [\alpha_i, \alpha_j]$.
		
		\item Suppose now that $m := m_{st} < \infty$. Let $r_S \in W$ be the unique element of maximal length (see \cite[Corollary~2.19]{AB08}). We can extend $G$ to a minimal gallery $H = (h_1, \ldots, h_m)$ from $1_W$ to $r_S$. We let $(\beta_1, \ldots, \beta_m)$ be the sequence of roots crossed by $H$. Let $H' = (h_1', \ldots, h_m')$ be the unique minimal gallery from $1_W$ to $r_S$ which is different from $H$, i.e.\ if $H$ has type $(s, t, \ldots)$, then $H'$ has type $(t, s, \ldots)$. Let $1 \leq i \leq n \leq j \leq m$ and suppose that $\beta_n \notin [\beta_i, \beta_j]$. Then there exist $1 \leq n' \leq m$ such that $h_{n'}' \in \alpha_i \cap \alpha_j \cap (-\alpha_n)$. Now \cite[Proposition~29.18]{We09} yields that $n-1 + n' > m_{st}$ as well as $j-1 + n' \leq m_{st}$. This is a contradiction to the fact that $n \leq j$. \qedhere
	\end{itemize}
\end{proof}

\subsection*{Root bases}

This section is based on \cite{CR09b}.
A \emph{root basis} is a tuple $\mc{B} = (V, \Pi, \Pi^\vee = \{ \alpha^\vee \mid \alpha \in \Pi \})$ consisting of a real vector space $V$, a non-empty set $\Pi \subseteq V$, and a set of linear forms $\Pi^\vee \subseteq V^*$ satisfying the following axioms, where $\gen{\alpha, \beta^\vee} := \beta^\vee(\alpha)$:
\begin{enumerate}[label=(RB\arabic*)]
	\item For each $\alpha \in \Pi$ we have $\gen{\alpha, \alpha^\vee} = 2$.
	
	\item For all $\alpha, \beta \in \Pi$ with $\alpha \neq \beta$ one of the following hold:
	\begin{itemize}
		\item $\gen{\alpha, \beta^\vee} = 0 = \gen{\beta, \alpha^\vee}$;
		
		\item $\gen{\alpha, \beta^\vee}, \gen{\beta, \alpha^\vee} <0$ and $\gen{\alpha, \beta^\vee} \gen{\beta, \alpha^\vee} \in \left\{ 4 \cos^2 \left( \frac{\pi}{k} \right) \mid k \in \NN \right\} \cup \RR_{\geq 4}$.
	\end{itemize}
	
	\item\label{Item: RB3} There exists $f \in V^*$ such that $\gen{\alpha, f} >0$ for all $\alpha \in \Pi$.
\end{enumerate}

Note that \ref{Item: RB3} is always satisfied if $\Pi$ is linearly independent. Let $\mc{B} = (V, \Pi, \Pi^\vee)$ be a root basis. The \emph{Cartan matrix of $\mc{B}$} is the matrix $A(\mc{B}) := (A_{_{\alpha, \beta}})_{\alpha, \beta \in \Pi}$ defined by $A_{\alpha, \beta} := \gen{\alpha, \beta^\vee}$. For each $\alpha \in \Pi$ we define the reflection $r_\alpha\co V \to V, \, v \mapsto v - \gen{v, \alpha^\vee} \alpha$. Moreover, we define $S = S(\mc{B}) = \{ r_\alpha \mid \alpha \in \Pi \}$ and $W = W(\mc{B}) = \gen{S(\mc{B})} \leq \GL(V)$. It is a fact that the pair $(W, S)$ is a Coxeter system (see \cite[Theorem~1.3(i)]{CR09b}). Furthermore, for all distinct $\alpha, \beta \in \Pi$, the order $o(r_\alpha r_\beta)$ of $r_\alpha r_\beta$ is given by
\[ o(r_\alpha r_\beta) = \begin{cases}
	k, & A_{\alpha, \beta} A_{\beta, \alpha} = 4\cos^2\left( \frac{\pi}{k} \right), \\
	\infty, & A_{\alpha, \beta} A_{\beta, \alpha} \geq 4.
\end{cases} \]

The \emph{root system of $\mc{B}$} is the set $\Phi(\mc{B}) = \{ w(\alpha) \mid w\in W, \alpha \in \Pi \}$. Moreover, we define
\[ \Phi(\mc{B})_+ := \Phi(\mc{B}) \cap \left( \sum\nolimits_{\alpha \in \Pi} \RR_{\geq 0} \alpha \right) \qquad\text{and}\qquad \Phi(\mc{B})_- := \Phi(\mc{B}) \cap \left( \sum\nolimits_{\alpha \in \Pi} \RR_{\leq 0} \alpha \right). \]
It is a fact that $\Phi(\mc{B}) = \Phi(\mc{B})_+ \cup \Phi(\mc{B})_-$ (see \cite[Theorem~1.3(ii)]{CR09b}). The root basis $\mc{B}$ is called \emph{reduced} if $\Phi(\mc{B}) \cap \RR \alpha = \{ \alpha, -\alpha \}$ for all $\alpha \in \Pi$. By \cite[Lemma~$1.7$]{CR09b} the root basis $\mc{B}$ is reduced if and only if for all distinct $\alpha, \beta \in \Pi$ such that the order $o(r_{\alpha} r_{\beta})$ is finite and odd, one has $A_{\alpha, \beta} = A_{\beta, \alpha}$.

\begin{example}
	\begin{enumerate}[label=(\alph*)]
		\item Let $(W, S)$ be a Coxeter system. Let $V$ be a real vector space with basis $\Pi := \{ \alpha_s \mid s \in S \}$. Let $(\cdot, \cdot)\co V \times V \to \RR$ denote the symmetric bilinear form on $V$ defined by
		\[ (\alpha_s, \alpha_t) := \begin{cases}
			-\cos \left( \frac{\pi}{m_{st}} \right), & m_{st} < \infty, \\
			-1, & m_{st} = \infty.
		\end{cases} \]
		For each $s\in S$ we define a linear form
		\[ \alpha_s^{\vee}\co V \to \RR, \, v \mapsto 2 (v, \alpha_s). \]
		Set $\Pi^\vee := \{ \alpha_s^\vee \mid s\in S \}$. Then $\mc{B}(W, S) := (V, \Pi, \Pi^\vee)$ is a root basis.
		
		\item Let $I$ be a finite set and let $A = (a_{ij})_{i, j \in I}$ be a generalized Cartan matrix. Let $V$ be a real vector space with basis $\Pi = \{ \alpha_i \mid i \in I \}$. For $i \in I$ we define the linear form 
		\[ \alpha_i^\vee\co V \to \RR, \, \alpha_j \mapsto \gen{\alpha_j, \alpha_i^\vee} := a_{ij} \qquad\text{for all } j\in I. \]
		Set $\Pi^\vee = \{ \alpha_i^\vee \mid i \in I \}$. Then $\mc{B}(A) := (V, \Pi, \Pi^\vee)$ is a root basis.
	\end{enumerate}
\end{example}

\subsection*{Root bases for Coxeter systems}

Let $(W, S)$ be a Coxeter system and let $\mc{B} = (V, \Pi, \Pi^\vee)$ be a root basis. Recall that $(W(\mc{B}), S(\mc{B}))$ is a Coxeter system. We say that the root basis $\mc{B}$ is \emph{associated with} $(W, S)$ if there exists a bijection $S \to S(\mc{B})$ which extends to an isomorphism $W \to W(\mc{B})$ (equivalently, there exists an isomorphism $W \to W(\mc{B})$ which maps $S$ to $S(\mc{B})$). In this case we relax notation and identify $W$ with $W(\mc{B})$

\begin{example}
	The root basis $\mc{B}(W, S)$ is associated with $(W, S)$.
\end{example}

The following lemma is essentially obtained from \cite[B.4]{Ma18} and \cite[$\S$5.1]{Re03}.

\begin{lemma}\label{Lemma: W-equivariant bijection}
	Let $\mc{B}$ be a root basis associated with $(W, S)$. For $\alpha \in \Phi(\mc{B})$ we define $\alpha_{\halfspace} := \{ w\in W \mid w\inv(\alpha) \in \Phi(\mc{B})_+ \}\footnote{$\halfspace$ stands for half-space}$.
	\begin{enumerate}[label=(\alph*)]
		\item For $\alpha \in \Phi(\mc{B})$ we have $\alpha_{\halfspace} \in \Phi(W, S)$.
		
		\item The mapping $\phi\co \Phi(\mc{B}) \to \Phi(W, S), \, \alpha \mapsto \alpha_{\halfspace}$ is well-defined, $W$-equivariant and surjective. In addition, if $\mc{B}$ is reduced, then $\phi$ is bijective.
	\end{enumerate}
\end{lemma}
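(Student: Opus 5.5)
The plan is to reduce everything to the simple roots, using three facts about root bases from \cite{CR09b}: the dichotomy $\Phi(\mc{B}) = \Phi(\mc{B})_+ \cup \Phi(\mc{B})_-$; the fact that for $\alpha \in \Pi$ the reflection $r_\alpha$ permutes $\Phi(\mc{B})_+ \setminus \RR_{>0}\alpha$ (in the reduced case, $\Phi(\mc{B})_+ \setminus \{\alpha\}$) and sends $\alpha$ to $-\alpha$; and the length formula $\ell(w r_\alpha) > \ell(w) \iff w(\alpha) \in \Phi(\mc{B})_+$. All of these are part of \cite[Theorem~1.3 and \S1]{CR09b}. Throughout we identify $W$ with $W(\mc{B})$, writing $s$ for the element corresponding to $r_{\alpha}$ when $\alpha \in \Pi$.

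For (a), first treat a simple root $\alpha \in \Pi$ with corresponding $s \in S$. Then $w \in \alpha_{\halfspace}$ if and only if $w\inv(\alpha) \in \Phi(\mc{B})_+$. Applying the length formula to $w\inv$ gives that this holds if and only if $\ell(w\inv s) > \ell(w\inv)$, that is, if and only if $\ell(sw) > \ell(w)$. Hence $\alpha_{\halfspace} = \alpha_s$. Next, for $v \in W$ we have
\[ (v(\alpha))_{\halfspace} = \{ w \mid w\inv v(\alpha) \in \Phi(\mc{B})_+ \} = \{ w \mid (v\inv w)\inv(\alpha) \in \Phi(\mc{B})_+ \} = v\,\alpha_{\halfspace}. \]
Therefore $(v(\alpha))_{\halfspace} = v\alpha_s \in \Phi(W,S)$. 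Since every element of $\Phi(\mc{B})$ has the form $v(\alpha)$ with $\alpha \in \Pi$, this proves (a).

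For (b), the computation above already shows that $\phi$ is well defined (the definition of $\alpha_{\halfspace}$ refers only to $\alpha$ itself) and $W$-equivariant. Surjectivity holds because every root $v\alpha_s$ of $\Phi(W,S)$ equals $\phi(v(\alpha))$, where $\alpha \in \Pi$ corresponds to $s$.

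Injectivity under the reducedness hypothesis is the main step. By equivariance it suffices to show the following: if $\phi(\beta) = \phi(\alpha)$ with $\alpha \in \Pi$, then $\beta = \alpha$. (Given $\phi(\beta) = \phi(\beta')$ with $\beta' = v(\alpha)$, apply $v\inv$ to reduce to this case.) So suppose $\beta_{\halfspace} = \alpha_{\halfspace} = \alpha_s$. Then for every $w \in W$, the root $w\inv(\beta)$ is positive exactly when $w\inv(\alpha)$ is positive. Hence $\beta$ is positive (take $w = 1$) and $s(\beta)$ is negative (take $w = s$, using $s \notin \alpha_s$). By the permutation property of $r_\alpha$, this forces $\beta \in \RR_{>0}\alpha$. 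Reducedness, $\Phi(\mc{B}) \cap \RR\alpha = \{\pm\alpha\}$, then gives $\beta = \alpha$.

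The only delicate points are quoting the length formula and the permutation property correctly for possibly non-reduced and non-free root bases, where $\Pi$ need not be linearly independent. I would check that \cite{CR09b} states them in that generality. If it does not, I would derive the permutation property directly from \ref{Item: RB3}: the coefficient expansions of elements of $\Phi(\mc{B})_+$ in terms of $\Pi$ need not be unique, but the argument should go through using positivity of $f$.
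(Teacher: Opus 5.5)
Your proposal is correct and is essentially the paper's proof: (a), equivariance and surjectivity go exactly as there via the length criterion \cite[Lemma~1.5(i)]{CR09b}, and for injectivity the paper likewise moves one root to a simple root $\delta$ and separates the pair by $r_\delta$, proving inline the case of your permutation property that it needs (a positive coefficient on some $\gamma \neq \delta$ survives under $r_\delta$). On your caveat: when $\Pi$ is linearly dependent, that coefficient step (the paper's, or your fallback) needs axiom (RB2) in addition to $f$, because if $r_\delta$ sent such a positive root into $\Phi(\mc{B})_-$ you would get $\sum_{\gamma \neq \delta} c_\gamma \gamma = d\delta$ with all $c_\gamma \geq 0$ and not all zero, so $d > 0$ by applying $f$, whereas pairing with $\delta^\vee$ gives $2d \leq 0$.
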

\begin{proof}
	Let $\mc{B} = (V, \Pi, \Pi^\vee)$. Before we start the prove we observe the following for $w\in W$ and $\alpha \in \Phi(\mc{B})$:
	\allowdisplaybreaks
	\begin{align*}
		w(\alpha)_{\halfspace} &= \{ v \in W \mid v\inv(w(\alpha)) \in \Phi(\mc{B})_+ \} \\
		&= \{ v \in W \mid (w\inv v)\inv (\alpha) \in \Phi(\mc{B})_+ \} \\
		&= w \{ v\in W \mid v\inv(\alpha) \in \Phi(\mc{B})_+ \} \\
		&= w \alpha_{\halfspace}.
	\end{align*}
	
	(a) Note that for $\alpha \in \Pi$ and $w\in W$ the following hold (see \cite[Lemma~1.5(i)]{CR09b}):
	\allowdisplaybreaks
	\begin{align*}
		\ell(r_\alpha w) > \ell(w) \Longleftrightarrow w\inv(\alpha) \in \Phi(\mc{B})_+ \qquad\text{and}\qquad \ell(r_\alpha w) < \ell(w) \Longleftrightarrow w\inv(\alpha) \in \Phi(\mc{B})_-.
	\end{align*}
	We first proof the claim for $\alpha \in \Pi$. Let $s = r_\alpha \in S$. Then the following hold:
	\[ \alpha_{\halfspace} = \{ w\in W \mid w\inv(\alpha) \in \Phi(\mc{B})_+ \} = \{ w\in W \mid \ell(r_\alpha w) > \ell(w) \} = \alpha_s. \]
	Now let $\alpha \in \Phi(\mc{B})$. Then there exist $v\in W$ and $\beta \in \Pi$ with $\alpha = v(\beta)$. Let $s = r_{\beta} \in S$. Then the following hold:
	\allowdisplaybreaks
	\begin{align*}
		\alpha_{\halfspace} &= v(\beta)_{\halfspace} = v \beta_{\halfspace} = v \alpha_s.
	\end{align*}
	(b) By (a), the mapping $\phi$ is well-defined. Let $\alpha \in \Phi(\mc{B})$ and $w\in W$. Then $\phi$ is $W$-equivariant, as shown by the following:
	\allowdisplaybreaks
	\begin{align*}
		\phi(w(\alpha)) = w(\alpha)_{\halfspace} &= w \alpha_{\halfspace} = w \phi(\alpha).
	\end{align*}
	For $w\in W$ and $s = r_\alpha \in S$ for some $\alpha \in \Pi$, we have $\phi(w(\alpha)) = w \alpha_{\halfspace} = w\alpha_s$, hence $\phi$ is surjective. Now suppose that $\Phi(\mc{B})$ is reduced, and let $\alpha, \beta \in \Phi(\mc{B})$ be distinct. We claim that there exists $w\in W$ with
	\[ \{ w(\alpha), w(\beta) \} \cap \Phi(\mc{B})_{\pm} \neq \emptyset. \]
	To see this, we let $v\in W$ with $\delta := v(\alpha) \in \Pi \subseteq \Phi(\mc{B})_+$. If $v(\beta) \in \Phi(\mc{B})_-$, the claim follows with $w = v$. Thus we can assume $v(\beta) \in \Phi(\mc{B})_+$. Hence, there exist $p_{\gamma} \in \RR_{\geq 0}$ with $v(\beta) = \sum\nolimits_{\gamma \in \Pi} p_\gamma \gamma$. As $\alpha$ and $\beta$ are distinct, and as $\Phi(\mc{B})$ is reduced, there exists $\gamma \in \Pi \setminus \{ \delta \}$ with $p_{\gamma} >0$. But then the coefficient of $\gamma$ of $r_{\delta} v(\beta)$ is $p_{\gamma} >0$. This implies $r_{\delta} v(\beta) \in \Phi(\mc{B})_+$. As $r_{\delta} v(\alpha) = -v(\alpha) \in \Phi(\mc{B})_-$, the claim follows with $w = r_{\delta} v$.
	
	This implies that $w\inv \in \left( \alpha_{\halfspace} \cup \beta_{\halfspace} \right) \setminus \left( \alpha_{\halfspace} \cap \beta_{\halfspace} \right)$. In particular, $\phi(\alpha) \neq \phi(\beta)$ and $\phi$ is injective.
\end{proof}

\subsection*{RGD~systems}

Let $(W, S)$ be a Coxeter system. An \emph{RGD~system of type $(W, S)$} is a pair $\mc{D} = (G, (U_{\alpha})_{\alpha \in \Phi(W, S)})$ consisting of a group $G$ and a family of subgroups $U_{\alpha}$ called \emph{root groups} indexed by the set of roots $\Phi(W, S)$ satisfying some axioms. For the precise definition we refer the reader to \cite[Sections~7.8 and 8.6]{AB08}. The most important axiom makes an assumption about commutation relations between root groups corresponding to prenilpotent pairs $\{ \alpha, \beta \} \subseteq \Phi(W, S)$ with $\alpha \neq \beta$:
\[ [U_{\alpha}, U_{\beta}] \leq \gen{ U_{\gamma} \mid \gamma \in (\alpha, \beta) }. \]
For any $w\in W$ we define the group $U_w$ as follows:
\[ U_w := \gen{ U_\gamma \mid \gamma \in \Phi(W, S)_w }. \]
Let $H$ be a minimal gallery from $1_W$ to $w$ and let $(\alpha_1, \ldots, \alpha_k)$ be the sequence of roots crossed by $H$. Then the multiplication map
\begin{equation}\label{Equation: multiplication map bijection}
	U_{\alpha_1} \times \cdots \times U_{\alpha_k} \to U_w, \, (g_1, \ldots, g_k) \mapsto g_1 \cdots g_k
\end{equation}
is a bijection (see \cite[Corollary~8.34]{AB08}).

\subsection*{Linear RGD~systems}

Let $(W, S)$ be a Coxeter system, let $\mc{D} = (G, (U_{\alpha})_{\alpha \in \Phi(W, S)})$ be an RGD~system of type $(W, S)$, and let $\mc{B}$ be a reduced root basis associated with $(W, S)$. By Lemma~\ref{Lemma: W-equivariant bijection} there exists a $W$-equivariant bijection $\phi\co \Phi(W, S) \to \Phi(\mc{B})$. For a prenilpotent pair $\{\alpha, \beta\} \subseteq \Phi(W, S)$ we define the \emph{(algebraic) interval} (w.r.t.\ $\mc{B}$) by
\[ [\alpha, \beta]_{\lin}^{\mc{B}} := \phi\inv \left( \RR_{\geq 0} \phi(\alpha) + \RR_{\geq 0} \phi(\beta) \right) \cap \Phi(W, S) \]
as well as $(\alpha, \beta)_{\lin}^{\mc{B}} := [\alpha, \beta]_{\lin}^{\mc{B}} \setminus \{ \alpha, \beta \}$. It is a fact that $[\alpha, \beta]_{\lin}^{\mc{B}} \subseteq [\alpha, \beta]$. We say that $\mc{D}$ is \emph{linear w.r.t.\ $\mc{B}$} if for each prenilpotent pair $\{\alpha, \beta\} \subseteq \Phi(W, S)$ with $\alpha \neq \beta$ the following holds:
\[ [U_{\alpha}, U_{\beta}] \leq \gen{ U_{\gamma} \mid \gamma \in \left( \alpha, \beta \right)_{\lin}^{\mc{B}} }. \]
We say that $\mc{D}$ is \emph{linearizable} if there exists a reduced root basis $\mc{B}$ associated with $(W, S)$ such that $\mc{D}$ is linear w.r.t.\ $\mc{B}$.

\begin{lemma}\label{Lemma: non-linear RGD system}
	Let $(W, S)$ be a Coxeter system and let $\mc{D} = (G, (U_{\alpha})_{\alpha \in \Phi(W, S)})$ be an RGD~system of type $(W, S)$. Let $\{\alpha, \beta\} \subseteq \Phi(W, S)$ be a prenilpotent pair with $\alpha \neq \beta$, and let $u_\alpha \in U_\alpha, u_\beta \in U_\beta$. Suppose $(\alpha, \beta) = \{ \gamma_1, \ldots, \gamma_k \}$, and $u_i \in U_{\gamma_i}$ for $1 \leq i \leq k$ with
	\[ [u_\alpha, u_{\beta}] = u_1 \cdots u_k. \]
	Let $\mc{B}$ be a reduced root basis associated with $(W, S)$. If there exists $1 \leq i \leq k$ with $u_i \neq 1$ and $\gamma_i \notin (\alpha, \beta)_{\lin}^{\mc{B}}$, then $\mc{D}$ is not linear w.r.t.\ $\mc{B}$.
\end{lemma}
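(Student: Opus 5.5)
Assume, for a contradiction, that $\mc{D}$ is linear w.r.t.\ $\mc{B}$. Write $L := (\alpha, \beta)_{\lin}^{\mc{B}} \subseteq (\alpha,\beta)$. Then $[u_\alpha,u_\beta] \in \gen{U_\gamma \mid \gamma \in L}$. The plan is to show that the decomposition of an element of $\gen{U_\gamma \mid \gamma\in(\alpha,\beta)}$ as an ordered product $u_1\cdots u_k$ is unique. We will also show that elements of $\gen{U_\gamma\mid\gamma\in L}$ admit such a decomposition with trivial components outside $L$. Uniqueness then forces $u_i = 1$ whenever $\gamma_i\notin L$, which contradicts the hypothesis.

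First, reduce to a configuration where the bijectivity statement~\eqref{Equation: multiplication map bijection} applies. Since $\{\alpha,\beta\}$ is prenilpotent, choose $w\in W$ with $w\alpha,w\beta$ both positive. By Remark~\ref{Remark: intervals}, every $\gamma\in[\alpha,\beta]$ satisfies $\alpha\cap\beta\subseteq\gamma$, and since the pair is prenilpotent there is some $w'$ with $w'\alpha, w'\beta$ negative, hence $w'\gamma$ negative. Translating by an element of $W$, using $W$-equivariance of $\phi$ (Lemma~\ref{Lemma: W-equivariant bijection}) and the fact that conjugation by a lift of $w$ (axiom (RGD2)) transports root groups and commutators, we may assume that $\alpha,\beta$ and all $\gamma_i$ are positive roots contained in $\Phi(W,S)_v$ for a suitable $v\in W$. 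For this we need $v\notin\alpha\cup\beta$, hence $v \notin \gamma_i$. The standard fact (\cite[Section~8]{AB08}) is that the roots of $[\alpha,\beta]$ appear as a convex (nilpotent) set. Then $U_{[\alpha,\beta]} := \gen{U_\gamma\mid\gamma\in[\alpha,\beta]}$ and $U_{(\alpha,\beta)}$ are subgroups of $U_v$, and the product map $\prod_{\gamma\in\Psi}U_\gamma\to U_\Psi$ is a bijection for any nilpotent/convex $\Psi$ in any fixed order compatible with a minimal gallery. This is the analogue of \cite[Corollary~8.34]{AB08} for nilpotent sets of roots, and it holds in any order.

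Second, handle $L$. The set $L\cup\{\alpha,\beta\}=[\alpha,\beta]_{\lin}^{\mc{B}}$ is closed: if $\gamma,\delta$ lie in the cone $\RR_{\ge0}\phi(\alpha)+\RR_{\ge0}\phi(\beta)$, then $(\gamma,\delta)_{\lin}^{\mc{B}}$ lies in the same cone. By linearity of $\mc{D}$, the commutators of root groups indexed by $L$ stay in $\gen{U_\gamma\mid \gamma\in L}$. Hence $L$ is a nilpotent set. Since $L \subseteq (\alpha,\beta)$, the product map $\prod_{\gamma\in L}U_\gamma\to\gen{U_\gamma\mid\gamma\in L}$ is surjective, taken in the order induced from $\gamma_1,\dots,\gamma_k$. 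So $[u_\alpha,u_\beta]=\prod_{\gamma_i\in L}u'_i$. Padding with $u'_i=1$ for $\gamma_i\notin L$ gives a second expression $u'_1\cdots u'_k$ in $\prod_i U_{\gamma_i}$.

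Third, apply uniqueness. The map $U_{\gamma_1}\times\cdots\times U_{\gamma_k}\to U_{(\alpha,\beta)}$ is injective; this follows from the injectivity in~\eqref{Equation: multiplication map bijection} after inserting identity components for the remaining roots of $\Phi(W,S)_v$. Injectivity then gives $u_i=u'_i=1$ for $\gamma_i\notin L$, which is the required contradiction.

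The main obstacle is the first and second steps: ensuring that the given ordering $\gamma_1,\dots,\gamma_k$ yields a unique product decomposition. The paper lists $(\alpha,\beta)$ in an arbitrary order, so one must use that unique product decomposition holds in arbitrary order for nilpotent sets of roots (Tits/\cite[Prop.~8.??]{AB08}-style results, or \cite[Lemma~17]{AC16}-type statements). I would first try to cite this directly. Failing that, I would note that in the paper's applications the ordering comes from a minimal gallery, as in Lemma~\ref{Lemma: rank 2 intervals}, so that~\eqref{Equation: multiplication map bijection} applies verbatim.
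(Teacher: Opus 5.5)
Your overall strategy matches what the paper's one-line proof has in mind: use uniqueness of ordered products from \eqref{Equation: multiplication map bijection}, and produce a second decomposition of $[u_\alpha,u_\beta]$ supported on $L$. However, the key inference in your second step is false.

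From the fact that $[\alpha,\beta]_{\lin}^{\mc{B}}$ is closed under \emph{linear} intervals, you conclude that $L$ is a nilpotent set. You then invoke the product decomposition for nilpotent sets in an arbitrary order. Nilpotency in that standard result means closedness under the \emph{geometric} intervals $[\gamma,\delta]$, and $L$ need not have it. For $\gamma,\delta\in L$ you only know $[\gamma,\delta]\subseteq[\alpha,\beta]$. Nothing prevents $[\gamma,\delta]$ from containing roots whose images lie outside the cone $\RR_{\geq 0}\phi(\alpha)+\RR_{\geq 0}\phi(\beta)$; this is exactly the phenomenon of Lemma~\ref{Lemma: Extension Rémy}, on which the whole paper rests. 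So the claimed surjectivity of $\prod_{\gamma\in L}U_\gamma\to\gen{U_\gamma\mid\gamma\in L}$, in the order induced from $\gamma_1,\dots,\gamma_k$, is unsupported.

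Your third step has a related problem. Padding with identities gives injectivity via \eqref{Equation: multiplication map bijection} only if $\gamma_1,\dots,\gamma_k$ are indexed compatibly with a minimal gallery. There is also a smaller omission. To keep commutators of root groups indexed by $L$ inside $\gen{U_\gamma\mid\gamma\in L}$, you must check that $(\gamma,\delta)_{\lin}^{\mc{B}}$ contains neither $\alpha$ nor $\beta$. This holds because $\phi(\alpha),\phi(\beta)$ are linearly independent and, by reducedness, $\phi(\gamma),\phi(\delta)$ have strictly positive coefficients on both.

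The repair needs no nilpotency of $L$, provided you use the ordering to which \eqref{Equation: multiplication map bijection} refers (as the paper implicitly does).
\begin{enumerate}
\item After translating, assume $1_W\in\alpha\cap\beta$. Pick $v\in(-\alpha)\cap(-\beta)$ and a minimal gallery $(c_0=1_W,\dots,c_m=v)$, and index $(\alpha,\beta)$, hence $L$, by the order in which the roots are crossed.
\item If $\gamma,\delta\in L$ are crossed at steps $i<j$, then $c_{i-1}\in\gamma\cap\delta$ and $c_j\in(-\gamma)\cap(-\delta)$. By Remark~\ref{Remark: intervals}, every root of $(\gamma,\delta)\supseteq(\gamma,\delta)_{\lin}^{\mc{B}}$ is therefore crossed strictly between steps $i$ and $j$.
\item Linearity of $\mc{D}$ then gives $[U_\gamma,U_\delta]\le\gen{U_\epsilon\mid\epsilon\in L\text{ crossed strictly between }\gamma\text{ and }\delta}$.
\item The elementary collection argument now shows $\gen{U_\gamma\mid\gamma\in L}=\prod_{\gamma\in L}U_\gamma$ in gallery order: the first factor normalizes the subgroup generated by the later ones, and you induct on $\vert L\vert$.
\item Injectivity from \eqref{Equation: multiplication map bijection} then forces $u_i=1$ for $\gamma_i\notin L$.
\end{enumerate}
For a genuinely arbitrary indexing of $(\alpha,\beta)$ you would need a further argument, which the proposal does not supply.
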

\begin{proof}
	This follows immediately from (\ref{Equation: multiplication map bijection}).
\end{proof}

\begin{lemma}\label{Lemma: rank 2 intervals coindice}
	Let $\{ \alpha, \beta \} \subseteq \Phi(W, S)$ be a prenilpotent pair. Suppose that there exist $w\in W$ and $s, t\in S$ distinct such that $\{ w\alpha, w\beta \} \subseteq \Phi(\gen{s, t}, \{s, t\})$. Then $[\alpha, \beta]_{\lin}^{\mc{B}} = [\alpha, \beta]$ for any reduced root basis $\mc{B}$ associated with $(W, S)$.
\end{lemma}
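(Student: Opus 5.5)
The plan is to prove the two inclusions separately. The inclusion $[\alpha, \beta]_{\lin}^{\mc{B}} \subseteq [\alpha, \beta]$ has already been recorded as a fact, so the work is to show that every $\gamma \in [\alpha, \beta]$ lies in $\phi\inv(\RR_{\geq 0}\phi(\alpha) + \RR_{\geq 0}\phi(\beta))$.

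\emph{Reduction to the standard rank-$2$ subsystem.} Both $[\cdot,\cdot]$ and $[\cdot,\cdot]_{\lin}^{\mc{B}}$ are $W$-equivariant. For the geometric interval this is immediate from its definition. For the algebraic one it follows because $\phi$ is $W$-equivariant by Lemma~\ref{Lemma: W-equivariant bijection} and $W$ acts linearly on $V$. So we may replace $(\alpha, \beta)$ by $(w\alpha, w\beta)$ and assume $\alpha, \beta \in \Phi(\gen{s,t}, \{s,t\})$, viewed as roots of $(W,S)$ whose walls cross the residue $R = \gen{s,t}$ of $1_W$.

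Next we show that every $\gamma \in [\alpha, \beta]$ also has its wall crossing $R$. The pair is prenilpotent, and the intersection of a root of $W$ with $R$ is a root of the dihedral group $R$. Hence $\alpha \cap \beta \cap R$ and $(-\alpha)\cap(-\beta)\cap R$ are both non-empty. By Remark~\ref{Remark: intervals}, $\gamma \cap R$ contains the first set and misses the second, so $\gamma \cap R$ is neither $\emptyset$ nor $R$. Thus $r_\gamma \in \gen{s,t}$, and $\gamma$ corresponds to a root of the rank-$2$ system.

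After translating by a further element of $\gen{s,t}$ (prenilpotency inside $R$), we may assume $\alpha$ and $\beta$ are positive. Then there is a minimal gallery from $1_W$ inside $R$ whose crossed roots $(\alpha_1, \ldots, \alpha_k)$ satisfy $\alpha = \alpha_i$ and $\beta = \alpha_j$ with $i \le j$. Lemma~\ref{Lemma: rank 2 intervals}, applied in $(\gen{s,t},\{s,t\})$ and transported back via the correspondence $\gamma \mapsto \gamma \cap R$, gives $\gamma \in \{\alpha_i, \ldots, \alpha_j\}$.

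\emph{The linear step.} Write $a_s, a_t \in \Pi$ for the simple vectors with $r_{a_s} = s$ and $r_{a_t} = t$. The roots crossed by a gallery of type $(s,t,s,\ldots)$ are mapped by $\phi$ to $a_s,\ s(a_t),\ st(a_s), \ldots$. These all lie in the $2$-dimensional plane $\RR a_s + \RR a_t$, in its positive quadrant. It remains to show that these vectors rotate monotonically from $a_s$ towards $a_t$, i.e.\ that $\phi(\alpha_n) \in \RR_{\ge 0}\phi(\alpha_i) + \RR_{\ge 0}\phi(\alpha_j)$ for $i \le n \le j$. I would check this by writing $\phi(\alpha_n) = x_n a_s + y_n a_t$ and showing that the ratio $y_n/x_n$ is strictly increasing in $n$. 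The coefficients satisfy a Chebyshev-type recursion governed by $A_{a_s,a_t}$ and $A_{a_t,a_s}$, and monotonicity of the slope is equivalent to $\det(\phi(\alpha_n), \phi(\alpha_{n+1})) > 0$. This determinant is preserved by the recursion up to a positive factor: when the basis is not symmetric, the determinant is multiplied alternately by the two off-diagonal entries, which keeps the sign. The claim then follows from positivity of all coefficients, which holds by \cite[Theorem~1.3(ii)]{CR09b}.

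\emph{Main obstacle.} I expect the main obstacle to be this last determinant/monotonicity argument for a general, possibly non-symmetric, reduced Cartan matrix. The finite case $m_{st} < \infty$ needs care: the orbit must not wrap around past $a_t$, and this is exactly what reducedness and the count of $m_{st}$ positive roots guarantee. In the case $m_{st} = \infty$ (product $\ge 4$), the slopes increase without ever reaching that of $a_t$.
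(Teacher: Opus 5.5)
Your strategy is correct, but at the key linear step you take a different route from the paper.

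\textbf{The paper's route.} The paper normalizes to $\alpha=\alpha_s$, $\beta=s_1\cdots s_{k-1}\alpha_{s_k}$ and inducts on $k$. By Lemma~\ref{Lemma: rank 2 intervals}, $[\alpha,\beta]=\{\alpha\}\cup[s\alpha_t,\beta]$, so it only has to show that the single root $\gamma=s\alpha_t$ lies in $[\alpha,\beta]_{\lin}^{\mc{B}}$. Writing $\phi(\gamma)$ as a combination of $\phi(\alpha_s)$ and $\phi(\beta)$, the coefficient of $\phi(\beta)$ is $1/b_t>0$. A negative coefficient of $\phi(\alpha_s)$ is ruled out geometrically: it would give $\gamma\in[-\alpha,\beta]_{\lin}^{\mc{B}}\subseteq[-\alpha,\beta]$, hence $\beta\subseteq\gamma$ by Remark~\ref{Remark: intervals}, which is absurd. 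So the paper never computes the rank-$2$ root system; it lets the inclusion of algebraic into geometric intervals control the signs.

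\textbf{Your route.} You show directly that the images of the crossed roots turn monotonically through the positive quadrant of $\RR a_s+\RR a_t$. This needs neither induction nor the auxiliary pair $\{-\alpha,\beta\}$, and it gives the more explicit fact that the whole chain is ordered by angle. You also make explicit the reduction that every $\gamma\in[\alpha,\beta]$ has its wall meeting $R$, which the paper leaves implicit. Note that $\alpha\cap\beta\cap R\neq\emptyset$ needs a word: project a chamber of $\alpha\cap\beta$ onto $R$, and use that walls meeting $R$ do not separate a chamber from its projection.

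\textbf{The determinant step needs a new justification.} Your stated reason is wrong. The off-diagonal Cartan entries are $\le 0$ and vanish when $m_{st}=2$, so multiplying by them would not preserve the sign. In fact the determinant is constant. Put $v_n:=\phi(\alpha_n)$ for the gallery of type $(s,t,s,\ldots)$, so $v_{2k+1}=(st)^k(a_s)$ and $v_{2k+2}=(st)^k s(a_t)$. Since $s$ and $t$ preserve $\RR a_s+\RR a_t$ and act on it with determinant $-1$, in the basis $(a_s,a_t)$ you get
\[ \det(v_{2k+1},v_{2k+2})=\det(a_s,s(a_t))=1, \qquad \det(v_{2k+2},v_{2k+3})=-\det(a_t,t(a_s))=1. \]
All $v_n$ lie in the closed positive quadrant, so their angles strictly increase inside $[0,\pi/2]$. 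For $i\le n\le j$, Cramer's rule then gives
\[ v_n=\frac{\det(v_n,v_j)}{\det(v_i,v_j)}\,v_i+\frac{\det(v_i,v_n)}{\det(v_i,v_j)}\,v_j \]
with nonnegative coefficients. In particular the wrap-around you worry about cannot occur. Reducedness plays no role in this step; it is only needed to make $\phi$ a bijection.

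\textbf{Positivity of coordinates.} Since $\Pi$ need not be linearly independent, global positivity does not directly give nonnegative coordinates with respect to $(a_s,a_t)$. Instead, apply \cite[Theorem~1.3(ii)]{CR09b} to the sub-basis $\{a_s,a_t\}$ to get $v_n\in\pm(\RR_{\ge 0}a_s+\RR_{\ge 0}a_t)$. Then exclude the negative cone with the functional from (RB3), using $v_n\in\Phi(\mc{B})_+$. The paper tacitly uses the same fact for $b_s,b_t$.
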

\begin{proof}
	Let $\mc{B}$ be a reduced root basis associated with $(W, S)$, and let $\phi\co \Phi(W, S) \to \Phi(\mc{B})$ be the $W$-equivariant bijection from Lemma~\ref{Lemma: W-equivariant bijection}. Note first that
	\begin{equation}\label{Equation: intervals W-equivariant}
		v [\alpha, \beta]_{\lin}^{\mc{B}} = [v\alpha, v\beta]_{\lin}^{\mc{B}}
	\end{equation}
	for any $v\in W$, as $\phi(\Phi(W, S)_+) = \Phi(\mc{B})_+$. Moreover, we have $v[\alpha, \beta] = [v\alpha, v\beta]$. Hence it suffices to show the claim for $\alpha = \alpha_s$ and $\beta = s_1 \cdots s_{k-1} \alpha_{s_k}$, where $(s_1, \ldots, s_k) = (s, t, \ldots)$ and $\ell(s_1 \cdots s_k) = k$. Note that in this case $\beta \in \Phi(W, S)_+$ and there exists a minimal gallery between $\alpha$ and $\beta$ of type $(s_1, \ldots, s_k)$. We show the claim by induction on $k$. If $k = 1$, then $\alpha = \beta$ and there is nothing to show. Thus we can assume $k \geq 2$. Let $\gamma = s_1 \alpha_{s_2} = s \alpha_t$. Note that $[\alpha, \beta] = \{ \alpha \} \cup [\gamma, \beta]$ by Lemma~\ref{Lemma: rank 2 intervals}. Using induction, we have $[\gamma, \beta]_{\lin}^{\mc{B}} = [\gamma, \beta]$. Hence it suffices to show $\gamma \in [\alpha, \beta]_{\lin}^{\mc{B}}$. Note that the positive root $\beta$ is different from $\alpha_{s_1} = \alpha_s$ (see \cite[Lemma~3.69]{AB08}). Thus there exist $b_s \in \RR_{\geq 0}$ and $b_t \in \RR_{>0}$ with 
	\[ \phi(\beta) = b_s \phi(\alpha_s) + b_t \phi(\alpha_t). \]
	This implies that
	\[ \phi(\gamma) = \phi(s_1(\alpha_{s_2})) = s \phi(\alpha_t) = \phi(\alpha_t) - a_{st} \phi(\alpha_s) = \left( -a_{st} - \frac{b_s}{b_t} \right) \phi(\alpha_s) + \frac{1}{b_t} \phi(\beta). \]
	Now suppose that $-a_{st} - \frac{b_s}{b_t} <0$. Then we would have $\gamma \in [-\alpha, \beta]_{\lin}^{\mc{B}} \subseteq [-\alpha, \beta]$ and, hence, Remark~\ref{Remark: intervals} would yield that
	\[ \beta = ((-\alpha) \cap \beta) \cup (\alpha \cap \beta) \subseteq \gamma. \]
	This is a contradiction.
\end{proof}

\begin{lemma}\label{Lemma: nc implies linearizable}
	Any RGD~system $\mc{D}$ satisfying condition (nc) is linearizable.
\end{lemma}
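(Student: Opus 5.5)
We will show that such a $\mc{D}$ is linear with respect to the root basis $\mc{B} := \mc{B}(W, S)$. This basis is associated with $(W, S)$. It is reduced because its Cartan matrix is symmetric (by definition $A_{\alpha_s, \alpha_t} = 2(\alpha_s, \alpha_t)$), so the criterion from \cite[Lemma~1.7]{CR09b} applies. In fact, we expect any reduced root basis associated with $(W, S)$ to work, since the argument only uses Lemma~\ref{Lemma: rank 2 intervals coindice}.

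Let $\{\alpha, \beta\} \subseteq \Phi(W, S)$ be a prenilpotent pair with $\alpha \neq \beta$, and distinguish two cases according to $o(r_\alpha r_\beta)$.

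\emph{Case $o(r_\alpha r_\beta) = \infty$.} Condition (nc) gives $[U_\alpha, U_\beta] = 1$, so the inclusion $[U_\alpha, U_\beta] \leq \gen{U_\gamma \mid \gamma \in (\alpha, \beta)_{\lin}^{\mc{B}}}$ holds trivially.

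\emph{Case $o(r_\alpha r_\beta) < \infty$.} Here we want to reduce to Lemma~\ref{Lemma: rank 2 intervals coindice}, so we need $w \in W$ and distinct $s, t \in S$ with $\{w\alpha, w\beta\} \subseteq \Phi(\gen{s,t}, \{s,t\})$. The reflections $r_\alpha$ and $r_\beta$ generate a finite dihedral reflection subgroup. A finite subgroup of $W$ is contained in a conjugate of a spherical parabolic subgroup. Better, the finite reflection subgroup $\gen{r_\alpha, r_\beta}$ fixes a residue of the Coxeter complex, and by the standard fact on finite reflection subgroups (cf.\ \cite{AB08}) it is contained in the stabilizer $w^{-1}\gen{J}w$ of a spherical residue, with $r_\alpha, r_\beta$ reflections of that residue. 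The walls of $\alpha$ and $\beta$ then both meet a common rank-$2$ residue $R$: take a chamber $c$ adjacent to both walls inside the residue stabilized by $\gen{r_\alpha, r_\beta}$. Its $\{s,t\}$-residue for suitable $s,t$ contains panels in both walls. Translating $c$ to $1_W$ by some $w$ gives $w\alpha, w\beta \in \Phi(\gen{s,t},\{s,t\})$, where roots of the subsystem are identified with roots of $W$ whose walls meet the standard $\{s,t\}$-residue.

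With this in hand, Lemma~\ref{Lemma: rank 2 intervals coindice} gives $(\alpha,\beta)_{\lin}^{\mc{B}} = (\alpha, \beta)$, and \ref{Equation: RGD1} yields the required inclusion. Combining the two cases, $\mc{D}$ is linear w.r.t.\ $\mc{B}$ and hence linearizable.

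The main obstacle is the geometric claim in the second case: the walls of two roots whose reflections generate a finite group cross a common rank-$2$ residue. We would justify it by the existence of a chamber on which the finite group $\gen{r_\alpha, r_\beta}$ acts with a fixed spherical residue, together with the fact that the walls of $r_\alpha$ and $r_\beta$ meet in a codimension-$2$ face, i.e.\ a rank-$2$ residue. We would cite this standard building-theoretic fact from \cite{AB08}.
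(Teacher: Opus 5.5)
Your proposal is correct and is essentially the paper's proof. Both split according to whether $o(r_\alpha r_\beta)$ is finite, using (nc) in the infinite case and Lemma~\ref{Lemma: rank 2 intervals coindice} together with (RGD1) in the finite case; the paper allows an arbitrary reduced root basis and simply cites \cite[Lemma~2.8]{Bischof:Construction_of_commutator_blueprints} for the rank-$2$ reduction you sketch. One detail of that sketch is false as stated: a chamber with a panel in each of the two walls need not exist (this already fails in type $B_2$ for the orthogonal walls of $\alpha_s$ and $t\alpha_s$), so the reduction should instead rest on your codimension-$2$ face argument, i.e.\ a rank-$2$ residue crossed by both walls.
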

\begin{proof}
	Let $\mc{D} = (G, (U_{\alpha})_{\alpha \in \Phi(W, S)})$ be an RGD~system of type $(W, S)$, let $\mc{B}$ be a reduced root basis associated with $(W, S)$, and let $\{ \alpha, \beta \} \subseteq \Phi(W, S)$ be a prenilpotent pair with $\alpha \neq \beta$. If $o(r_{\alpha} r_{\beta}) < \infty$, then $[\alpha, \beta] = [\alpha, \beta]_{\lin}^{\mc{B}}$ by Lemma~\ref{Lemma: rank 2 intervals coindice} (see also \cite[Lemma~2.8]{Bischof:Construction_of_commutator_blueprints}). In particular, we have $[U_{\alpha}, U_{\beta}] \leq \gen{ U_{\gamma} \mid \gamma \in [\alpha, \beta]_{\lin}^{\mc{B}} }$. If $o(r_{\alpha} r_{\beta}) = \infty$, then $\{ \alpha, \beta \}$ is \emph{nested}, that is, $\alpha \subseteq \beta$ or $\beta \subseteq \alpha$ (see \cite[Proposition~3.165 \& Lemma~8.42(3)]{AB08}). Now condition (nc) yields that $[U_{\alpha}, U_{\beta}] = 1$. We conclude that $\mc{D}$ is linearizable.
\end{proof}

\begin{remark}\label{Remark: independent intervals}
	We believe that the interval $[\alpha, \beta]_{\mathrm{lin}}^{\mathcal{B}}$ is independent of the root basis $\mathcal{B}$. In some special cases (e.g.\ $(W, S)$ is of simply-laced type) the action of $S \subseteq W$ on the corresponding vector space is independent of the root basis $\mathcal{B}$. In particular, the interval is independent of $\mathcal{B}$.
	
	Let us discuss the general case. If $o(r_{\alpha} r_{\beta}) < \infty$, then we have $[\alpha, \beta]_{\mathrm{lin}}^{\mathcal{B}} = [\alpha, \beta]$ by Lemma~\ref{Lemma: rank 2 intervals coindice}. Hence, the interval is independent of $\mathcal{B}$. If $o(r_{\alpha} r_{\beta}) = \infty$, it suffices by (\ref{Equation: intervals W-equivariant}) to consider the case $\alpha = \alpha_s$ for some $s\in S$. A first natural step would be to understand these intervals between roots of Coxeter systems of universal type.
\end{remark}

\section{Non-linearizable RGD~systems of universal type}\label{Section: Non-linearizable universal type}

In this section we let $(W, S)$ be a Coxeter system of universal type of rank at least three, and we let $r, s, t \in S$ be pairwise distinct. For $n\in \NN$ we define $\alpha_n := (st)^n s \alpha_t$. The following lemma is a generalization of Rémy's example \cite[p.\ 130]{Re03}.

\begin{lemma}\label{Lemma: Extension Rémy}
	Let $\mc{B}$ be a reduced root basis associated with $(W, S)$. Then we have $\alpha_s \notin [-\alpha_r, \alpha_n]_{\lin}^{\mc{B}}$.
\end{lemma}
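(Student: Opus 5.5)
The plan is a direct linear-algebra argument. I will assume a linear relation between the relevant vectors and evaluate it against two coroots to reach a sign contradiction. I deliberately avoid comparing coefficients in $\Pi$, since $\Pi$ need not be linearly independent in a general root basis.

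First I fix notation. Write $\mc{B} = (V, \Pi, \Pi^\vee)$ and let $\phi\co \Phi(W,S) \to \Phi(\mc{B})$ be the $W$-equivariant bijection from Lemma~\ref{Lemma: W-equivariant bijection}. Write $e_x := \phi(\alpha_x) \in \Pi$ for $x \in \{r,s,t\}$ and $c_{xy} := \gen{e_x, e_y^\vee}$. Since $(W,S)$ is of universal type, (RB2) gives $c_{xy} < 0$ for all distinct $x,y$.

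Next I compute $\beta := \phi(\alpha_n) = (st)^n s(e_t)$ in the rank-$2$ subspace spanned by $e_s, e_t$. Applying the reflections $r_{e_s}, r_{e_t}$ successively yields $\beta = a e_s + b e_t$ with explicit real numbers $a, b$. I need $a \geq 0$ and $b > 0$. The point is that $\alpha_n$ is a positive root of the infinite dihedral subsystem $\Phi(\gen{s,t},\{s,t\})$, distinct from $\alpha_s$. The claim should follow by a routine induction on $n$ with the recursion for the coefficients, using $c_{st}c_{ts} \geq 4$; alternatively, one can argue as in the proof of Lemma~\ref{Lemma: rank 2 intervals coindice}. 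The pairings I will use are
\[ \gen{\beta, e_r^\vee} = a c_{sr} + b c_{tr} < 0 \qquad\text{and}\qquad \gen{\beta, e_t^\vee} = a c_{st} + 2b. \]

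Now suppose, for a contradiction, that $\alpha_s \in [-\alpha_r, \alpha_n]_{\lin}^{\mc{B}}$. Then $e_s = -\lambda e_r + \mu \beta$ for some $\lambda, \mu \geq 0$. We cannot have $\lambda = \mu = 0$, as $e_s \neq 0$.

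Pairing the relation with $e_r^\vee$ gives
\[ (1 - \mu a)\, c_{sr} = -2\lambda + \mu b\, c_{tr}. \]
The right-hand side is strictly negative: if $\mu > 0$ the term $\mu b c_{tr}$ is negative, and if $\mu = 0$ then $\lambda > 0$. Since $c_{sr} < 0$, this forces $1 - \mu a > 0$.

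Pairing the relation with $e_t^\vee$ gives
\[ (1 - \mu a)\, c_{st} = -\lambda c_{rt} + 2\mu b = \lambda |c_{rt}| + 2\mu b \geq 0. \]
But the left-hand side is strictly negative, because $1 - \mu a > 0$ and $c_{st} < 0$. This is a contradiction, so $\alpha_s \notin [-\alpha_r, \alpha_n]_{\lin}^{\mc{B}}$.

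I expect the main obstacle to be conceptual rather than computational. Sign or half-space arguments (choosing $w \in W$ to separate positive and negative cones) cannot work here: one checks via Remark~\ref{Remark: intervals} that $\alpha_s \in [-\alpha_r, \alpha_n]$ geometrically. So the proof must use the actual values of the Cartan entries. Moreover, since $\Pi$ may be linearly dependent, one has to pair with coroots rather than read off coefficients, and choosing the right two coroots ($e_r^\vee$, then $e_t^\vee$) is the crux. The only routine verification remaining is $a \geq 0$, $b > 0$ for $\beta$.
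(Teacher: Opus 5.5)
Your proof is correct, and it takes a different route from the paper's.

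\textbf{The paper's route.} The paper writes $\phi(\alpha_n)=\lambda_n\phi(\alpha_s)+\mu_n\phi(\alpha_t)$. It notes that $\mu_n\neq 0$, because $\alpha_n\neq\pm\alpha_s$ and $\mc{B}$ is reduced. It then concludes that $\phi(\alpha_s)$ is not even a real linear combination of $\phi(-\alpha_r)$ and $\phi(\alpha_n)$. This comparison of coefficients tacitly assumes that $\phi(\alpha_r),\phi(\alpha_s),\phi(\alpha_t)$ are linearly independent, and the paper does not justify that.

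The tacit step does hold in this setting. A linear relation among $e_r,e_s,e_t$ would give a nonzero kernel vector of the $3\times 3$ matrix $(c_{xy})_{x,y\in\{r,s,t\}}$. Its determinant is $8-2(c_{rs}c_{sr}+c_{rt}c_{tr}+c_{st}c_{ts})+c_{rs}c_{st}c_{tr}+c_{rt}c_{ts}c_{sr}$. In universal type each $c_{xy}c_{yx}\ge 4$ and both triple products are negative, so the determinant is at most $8-24<0$. Your concern that $\Pi$ may be linearly dependent is therefore legitimate in general, but it does not bite here.

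\textbf{Your route.} Pairing with $e_r^\vee$ and then $e_t^\vee$ avoids this question entirely. You work directly from (RB1) and (RB2), using only the signs of the off-diagonal Cartan entries. Reducedness enters only to make $\phi$ a bijection, so that $[\cdot,\cdot]_{\lin}^{\mc{B}}$ is defined.

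\textbf{What each approach costs.} You need the explicit positivity of the coefficient $b$ of $\beta=\phi(\alpha_n)$. This is a short recursion using $c_{st}c_{ts}\ge 4$; after rescaling to symmetric coordinates it becomes $z_{k+1}=\sqrt{c_{st}c_{ts}}\,z_k-z_{k-1}$ with $z_0=0$, $z_1=1$. The paper needs only $\mu_n\neq 0$. On the other hand, you exclude $e_s$ only from the cone $\RR_{\ge 0}(-e_r)+\RR_{\ge 0}\beta$, not from the span; that is all the lemma requires.

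\textbf{A minor remark.} Your assumption $a\ge 0$ is never actually used in the contradiction. You move $\mu a\,c_{sr}$ to the left-hand side, so only $b>0$ is needed, in both the $e_r^\vee$ step and the $e_t^\vee$ step.
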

\begin{proof}
	Let $\phi\co \Phi(W, S) \to \Phi(\mc{B})$ be the $W$-equivariant bijection from Lemma~\ref{Lemma: W-equivariant bijection}.
	Note that for $u \in \{s, t\}$ and all $\lambda, \mu \in \RR$ there exist $\lambda', \mu' \in \RR$ with
	\[ u \left( \lambda \phi(\alpha_s) + \mu \phi(\alpha_t) \right) = \lambda' \phi(\alpha_s) + \mu' \phi(\alpha_t). \]
	This implies $\phi(\alpha_n) = \lambda_n \phi(\alpha_s) + \mu_n \phi(\alpha_t)$ for some $\lambda_n, \mu_n \in \RR$. In particular, as $\alpha_n \neq \pm \alpha_s$, we deduce $\mu_n \neq 0$, and $\phi(\alpha_s)$ is not a linear combination of $\phi(-\alpha_r)$ and $\phi(\alpha_n)$.
\end{proof}

\begin{theorem}\label{Theorem: RGD systems of universal type}
	For each $K \subseteq \NN$ there exists an RGD~system $\mc{D}(K) = (G, (U_{\alpha})_{\alpha \in \Phi(W, S)})$ of type $(W, S)$ over $\FF_2$ with the following properties:
	\begin{itemize}
		\item $\forall n \in K: [U_{-\alpha_r}, U_{\alpha_n}] = U_{\alpha_s}$;
		
		\item $\forall n \in \NN \setminus K: [U_{-\alpha_r}, U_{\alpha_n}] = \triv$.
	\end{itemize}
\end{theorem}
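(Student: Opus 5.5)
We will construct $\mc{D}(K)$ through the machinery of commutator blueprints from \cite{BiRGD}, specialized to universal type as in \cite{Bischof:Construction_of_commutator_blueprints}. A commutator blueprint of type $(W,S)$ over $\FF_2$ prescribes, for each $w\in W$ and each pair of roots $\alpha,\beta\in\Phi(W,S)_w$ with $o(r_\alpha r_\beta)=\infty$ that appear in the right order along a minimal gallery, a subset $M_{\alpha,\beta}^{G}\subseteq(\alpha,\beta)$. These subsets encode the commutator $[u_\alpha,u_\beta]$ as an ordered product of the nontrivial elements $u_\gamma$, $\gamma\in M$. The main theorem of \cite{BiRGD} states that if the blueprint is \emph{Weyl-invariant} and \emph{faithful}, then it is integrable: there is an RGD~system over $\FF_2$ whose commutation relations are exactly the prescribed ones.

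So the plan is to write down, for a given $K\subseteq\NN$, a blueprint $\mc{M}(K)$ with the following two features:
\begin{itemize}
	\item $M_{-\alpha_r,\alpha_n}=\{\alpha_s\}$ for $n\in K$ and $M_{-\alpha_r,\alpha_n}=\emptyset$ for $n\notin K$;
	\item everything not forced by Weyl-invariance is trivial, i.e.\ the set is empty.
\end{itemize}
Note first that $\alpha_s\in(-\alpha_r,\alpha_n)$ geometrically. By Remark~\ref{Remark: intervals} we must check $(-\alpha_r)\cap\alpha_n\subseteq\alpha_s$ and $\alpha_r\cap(-\alpha_n)\subseteq -\alpha_s$. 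In the universal Cayley tree these are half-trees, and the first inclusion holds because $\alpha_n\subseteq\alpha_s\cup\{\text{chambers starting with }s\ldots\}$ in the rank-2 residue. This is a short tree computation.

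Concretely, since $W$ is universal (a free product of copies of $\ZZ/2$), the Davis complex is a tree and all relevant data are determined by the $W$-orbit of the triple $(-\alpha_r,\alpha_n,\alpha_s)$. We therefore define the blueprint on the orbit $W\cdot\{-\alpha_r,\alpha_n\}$ by transport, $M_{w(-\alpha_r),w\alpha_n}=w M_{-\alpha_r,\alpha_n}$, and set all other sets empty. Two checks are needed here:
\begin{itemize}
	\item distinct $n$ give distinct orbits, and the ordered pair $(-\alpha_r,\alpha_n)$ has trivial stabilizer, so this is well defined;
	\item the other pairs in the orbit (with $\alpha_s$ translated) still satisfy the interval condition, which follows from $W$-equivariance of $[\cdot,\cdot]$.
\end{itemize}

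The main obstacle is verifying the conditions of \cite{BiRGD}, namely that the blueprint is consistent for every $w$. The group $U_w$ must be well defined, meaning the product formula in (\ref{Equation: multiplication map bijection}) yields a group of order $2^{\ell(w)}$ whose relations are independent of the chosen minimal gallery. In universal type there is a unique minimal gallery from $1_W$ to $w$, which removes the braid-type compatibilities. What remains is to show that the groups $U_w$ defined by these relations have the right order and are compatible under restriction to prefixes. I would first prove that, inside $\Phi_w$, the nontrivial commutators $[u_{\beta},u_{\gamma}]=u_\delta$ form a "triangular" system: $\delta$ lies strictly between $\beta$ and $\gamma$ in the gallery order, and $\delta$ commutes with both. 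This makes $U_w$ a class-2-like iterated extension with the normal form bijection. The point that makes this work is that each root $w\alpha_s$ is the middle root of a nontrivial relation only in a controlled configuration. I would also check that no two prescribed relations produce conflicting Hall–Witt identities, which reduces to case analysis on triples of roots in the tree.

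Once integrability is established, the two bullet points of the theorem are read off directly from the defining relations, since $U_{\alpha_s}\cong\FF_2$.
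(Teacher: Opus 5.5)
Your construction is the paper's. The $W$-orbit of $(-\alpha_r,\alpha_n)$ consists exactly of the pairs joined by a minimal gallery of type $(r,s,t,\ldots,s,t)$ with $s$ and $t$ each occurring $n+1$ times; the orbit representative is the gallery $(r,1,s,st,\ldots,(st)^{n+1})$, and $\alpha_s$ is its second crossed root. So your transported family is the paper's $M^H_{\alpha,\beta}=\{\beta_2\}$, Weyl-invariance is built in, and integrability comes from \cite[Theorem~A]{BiRGD} in both versions.

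The one real difference is how the commutator-blueprint property is obtained. The paper checks the hypotheses of \cite[Theorem~3.8]{Bischof:Construction_of_commutator_blueprints}; you want to verify $|U_w|=2^{\ell(w)}$ by hand. That works, but the fact you state is too weak: a middle root commuting with the two endpoints does not give class $2$. What you need, and what is true, is centrality. The middle root $\beta_{i+1}$ is crossed by an $s$-edge, whereas every non-trivial relation starts at a root crossed by an $r$-edge and ends at one crossed by a $t$-edge. Hence $u_{\beta_{i+1}}$ is central in $U_w$.

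Then $U_w$ is generated by a central elementary abelian subgroup $Z$ (spanned by the middle roots) together with involutions that commute modulo $Z$, which gives $|U_w|\le 2^{\ell(w)}$. For the lower bound, take $V\times Z$ with multiplication $(x,z)(x',z')=(x+x',z+z'+c(x,x'))$. Here $c$ is bilinear, with $c(e_a,e_b)=e_{a+1}$ if $a<b$ and $(\beta_a,\beta_b)$ is a non-trivial pair, and $c(e_a,e_b)=0$ otherwise. This group realizes all the relations, so no Hall--Witt case analysis is needed.

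Three smaller points need fixing:
\begin{itemize}
\item You list faithfulness as a hypothesis of \cite[Theorem~A]{BiRGD} but never address it. The paper obtains it from \cite[Remark~2]{BiRGD}.
\item The blueprint only records pairs of positive roots. The bullet points therefore come from $[U_{\alpha_r},U_{r\alpha_n}]=U_{r\alpha_s}$ for $n\in K$ (and $=\triv$ for $n\notin K$), after conjugating by a lift of $r$.
\item Your justification of $\alpha_s\in(-\alpha_r,\alpha_n)$ does not work, because the inclusion you quote is vacuous: its right-hand side is $W$. The actual reason is the nesting $-\alpha_r\subseteq\alpha_s\subseteq\alpha_n$ of the roots crossed by the gallery above, combined with Remark~\ref{Remark: intervals}.
\end{itemize}
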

\begin{proof}
	Before we start the proof, we introduce some notation. A minimal gallery $H$ is said to be \emph{of type $(n)$} for $n \in \NN$ if the type of $H$ is given by $(r, s, t, \ldots, s, t)$, where $s$ (and also $t$) appears $n+1$ times in the type of $H$. For instance, a minimal gallery of type $(2)$ is a minimal gallery of type $(r, s, t, s, t, s, t)$. Note that if $H$ is a minimal gallery of type $(n)$ from $1_W$ to $r(st)^{n+1}$, then $H$ is a minimal gallery between $\alpha_r$ and $r\alpha_n$.
	
	We now start the proof. We use the notation from \cite{Bischof:Construction_of_commutator_blueprints}. We first construct a commutator blueprint $\mc{M} = (M_{\alpha, \beta}^H)_{(H, \alpha, \beta) \in \mc{I}}$ as follows: Let $(H, \alpha, \beta) \in \mc{I}$ and $n \in K$. Suppose that there exists a minimal gallery $\Gamma$ of type $(n)$ between $\alpha$ and $\beta$, and let $(\beta_1 = \alpha, \ldots, \beta_n = \beta)$ be the corresponding sequence of roots crossed by $\Gamma$. Then we define
	\[ M_{\alpha, \beta}^H = M_{\beta_1, \beta_n}^H := \{ \beta_2 \}. \]
	If no such gallery exists, we define $M_{\alpha, \beta}^H := \emptyset$. This is a pre-commutator blueprint (see \cite[Remark~4.2]{Bischof:Construction_of_commutator_blueprints}). It is not hard to see that $\mc{M}$ satisfies the conditions of \cite[Theorem~3.8]{Bischof:Construction_of_commutator_blueprints}, hence it is a commutator blueprint. It is in addition Weyl-invariant, as $M_{\alpha, \beta}^H$ only depends on a minimal gallery between $\alpha$ and $\beta$ of a given type. By \cite[Remark~2]{BiRGD} it is also faithful and, hence, integrable by \cite[Theorem~A]{BiRGD}. This means that there exists a corresponding RGD~system $\mc{D}(K)$ with the desired commutation relations.
\end{proof}

\begin{corollary}\label{Corollary: uncountably many non-linearizable}
	There exist uncountably many RGD~systems of universal type which are not linearizable.
\end{corollary}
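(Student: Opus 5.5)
Fix a Coxeter system $(W, S)$ of universal type of rank at least three and distinct $r, s, t \in S$. Theorem~\ref{Theorem: RGD systems of universal type} gives an RGD~system $\mc{D}(K)$ for every $K \subseteq \NN$. There are uncountably many non-empty subsets $K \subseteq \NN$, so it suffices to prove two things: $\mc{D}(K)$ is not linearizable whenever $K \neq \emptyset$, and the systems $\mc{D}(K)$ are pairwise non-isomorphic for distinct $K$.

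\emph{Non-linearizability.} Let $K \neq \emptyset$, pick $n \in K$, and let $\mc{B}$ be any reduced root basis associated with $(W, S)$. The pair $\{-\alpha_r, \alpha_n\}$ is prenilpotent, since $\mc{D}(K)$ is an RGD~system and the commutator is non-trivial. By construction $[U_{-\alpha_r}, U_{\alpha_n}] = U_{\alpha_s} \neq \triv$.

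Write the commutator as in Lemma~\ref{Lemma: non-linear RGD system}, ordering $(-\alpha_r, \alpha_n) = \{\gamma_1, \ldots, \gamma_k\}$ compatibly with a minimal gallery. Choose $u_{-\alpha_r}, u_{\alpha_n}$ whose commutator is the non-trivial element $u \in U_{\alpha_s}$. Then the factor $u_i$ with $\gamma_i = \alpha_s$ equals $u \neq 1$, and all other factors are $1$; this uses the bijectivity (\ref{Equation: multiplication map bijection}), with $\alpha_s$ lying in the relevant $U_w$.

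Lemma~\ref{Lemma: Extension Rémy} gives $\alpha_s \notin [-\alpha_r, \alpha_n]_{\lin}^{\mc{B}}$. Lemma~\ref{Lemma: non-linear RGD system} then shows that $\mc{D}(K)$ is not linear w.r.t.\ $\mc{B}$. Since $\mc{B}$ was arbitrary, $\mc{D}(K)$ is not linearizable.

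A small subtlety here is checking that $\alpha_s \in (-\alpha_r, \alpha_n)$, so that the lemma applies. This holds because RGD1 forces $U_{\alpha_s} \le \langle U_\gamma \mid \gamma \in (-\alpha_r, \alpha_n)\rangle$, and by (\ref{Equation: multiplication map bijection}) that forces $\alpha_s$ to be in the interval. Alternatively, one can verify it directly from Remark~\ref{Remark: intervals}, since a minimal gallery of type $(n)$ between $\alpha_r$ and $r\alpha_n$ crosses $r\alpha_s$.

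\emph{Uncountably many.} This is the step I expect to need the most care: "uncountably many" should mean pairwise non-isomorphic, not merely distinct labels. For $K \neq K'$, pick $n$ in the symmetric difference. Then $[U_{-\alpha_r}, U_{\alpha_n}]$ is trivial in one system and non-trivial in the other, so the two systems are distinct as RGD~systems. To upgrade this to non-isomorphism, I would argue as follows. An isomorphism of RGD~systems of the same type induces a type-preserving automorphism of $(W, S)$ together with some $w\in W$ acting on the roots. One can then encode $K$ by an invariant, such as the set of $n$ for which some pair of roots at gallery-distance "type $(n)$" has non-commuting root groups.

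Since only countably many automorphisms of the diagram and elements of $W$ exist, each isomorphism class contains at most countably many $\mc{D}(K)$. Even without a precise invariant, this gives uncountably many isomorphism classes among the $2^{\aleph_0}$ non-empty $K$. Should isomorphism not be required, distinctness of the commutation relations already suffices.
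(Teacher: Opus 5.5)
This is essentially the paper's proof: for each $K$ you combine Theorem~\ref{Theorem: RGD systems of universal type}, the Rémy-type fact $\alpha_s \notin [-\alpha_r, \alpha_n]_{\lin}^{\mc{B}}$ and Lemma~\ref{Lemma: non-linear RGD system}. Your restriction to $K \neq \emptyset$ is needed, since for $K=\emptyset$ all commutators $[U_{-\alpha_r}, U_{\alpha_n}]$ are trivial and the argument gives nothing (the paper's proof passes over this), and your isomorphism-class count goes beyond the paper, which simply counts distinct $K$. Two justifications need fixing: a non-trivial commutator does not imply prenilpotency, since the RGD axioms say nothing about non-prenilpotent pairs; instead, $r$ maps $\{-\alpha_r, \alpha_n\}$ to $\{\alpha_r, r\alpha_n\}$, two roots crossed by a minimal gallery of type $(n)$ starting at $1_W$. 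Also, for your direct check that $\alpha_s \in (-\alpha_r, \alpha_n)$, you should use that in universal type the roots crossed by a minimal gallery are nested, because merely being crossed does not put a root into the interval.
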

\begin{proof}
	Let $K \subseteq \NN$ and let $\mc{D}(K)$ be the RGD~system of type $(W, S)$ from Theorem~\ref{Theorem: RGD systems of universal type}. Let $\mc{B}$ be a reduced root basis associated with $(W, S)$. Then Lemma~\ref{Lemma: Extension Rémy} together with Lemma~\ref{Lemma: non-linear RGD system} imply that $\mc{D}(K)$ is not linear w.r.t.\ $\mc{B}$. We conclude that there are uncountably many RGD~systems of universal type which are not linearizable.
\end{proof}

\section{Non-linearizable RGD~systems of type $(4, 4, 4)$}\label{Section: Non-linearizable (4,4,4)}

In this section we assume that $(W, S)$ is a Coxeter system of type $(4, 4, 4)$, and that $S = \{r, s, t\}$. Moreover, we define
\[ \gamma := rststrs\alpha_t \qquad\text{as well as}\qquad \gamma' := rststrt\alpha_s. \]
For $n\in \NN$ we define $\alpha_n := \alpha_r$ and $\beta_n := (rstst)^n \alpha_r$.

\begin{lemma}\label{Lemma: gamma/gamma' not in alphan,betan lin}
	For any reduced root basis $\mc{B}$ associated with $(W, S)$, we have $\{ \gamma, \gamma' \} \not\subseteq [\alpha_n, \beta_n]_{\lin}^{\mc{B}}$.
\end{lemma}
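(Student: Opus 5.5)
The plan is to prove something stronger than the statement: $\phi(\gamma)$ and $\phi(\gamma')$ cannot \emph{both} lie in the linear span $\RR\phi(\alpha_n) + \RR\phi(\beta_n)$. Nonnegativity of the coefficients plays no role. Let $\phi\co \Phi(W, S) \to \Phi(\mc{B})$ be the $W$-equivariant bijection from Lemma~\ref{Lemma: W-equivariant bijection}.

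First, dispose of the degenerate case. If $\beta_n = \alpha_n$ (e.g.\ $n = 0$), then $[\alpha_n, \beta_n]_{\lin}^{\mc{B}} = \{\alpha_r\}$, which contains neither $\gamma$ nor $\gamma'$. Otherwise $\beta_n \neq \pm\alpha_r$, because $\beta_n$ is positive and distinct from $\alpha_r$. Since $\mc{B}$ is reduced, $\phi(\alpha_r)$ and $\phi(\beta_n)$ are then linearly independent and span a plane $P$.

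Assume for a contradiction that $\gamma, \gamma' \in [\alpha_n, \beta_n]_{\lin}^{\mc{B}}$. Then $\phi(\gamma), \phi(\gamma'), \phi(\alpha_r) \in P$, so these three vectors are linearly dependent. Linear dependence is preserved by $W$ and $\phi$ is $W$-equivariant, so I apply $w\inv$ with $w := rststr$. This gives
\[ w\inv\gamma = s\alpha_t, \qquad w\inv\gamma' = t\alpha_s, \qquad w\inv\alpha_r = rtstsr\alpha_r = -rtsts\alpha_r. \]
Write $a := \gen{\alpha_s, \alpha_t^\vee}$ and $b := \gen{\alpha_t, \alpha_s^\vee}$, and abbreviate $\phi(\alpha_u)$ by $\alpha_u$ in the vector space. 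Then $s\alpha_t = \alpha_t - b\alpha_s$ and $t\alpha_s = \alpha_s - a\alpha_t$. Since $m_{st} = 4$ we have $ab = 2 \neq 1$, so these two vectors are independent whenever $\alpha_s$ and $\alpha_t$ are. In that case they span $\RR\alpha_s + \RR\alpha_t$, and the assumed dependence forces
\[ \phi(rtsts\alpha_r) \in \RR\phi(\alpha_s) + \RR\phi(\alpha_t). \]
So it remains to show that this membership fails.

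To rule it out, I track the $\alpha_r$-coefficient. The reflections $s$ and $t$ only add multiples of $\alpha_s$ and $\alpha_t$, so
\[ tsts\alpha_r = \alpha_r + x\alpha_s + y\alpha_t \]
for explicit $x, y$ depending on the Cartan entries. Applying $r$ changes the $\alpha_r$-coefficient to $1 - \gen{\alpha_r + x\alpha_s + y\alpha_t, \alpha_r^\vee} = -1 - x\gen{\alpha_s,\alpha_r^\vee} - y\gen{\alpha_t,\alpha_r^\vee}$. All off-diagonal Cartan entries are negative, and $x, y \geq 0$ because $tsts\alpha_r$ is a positive root. Hence this coefficient is at most $-1$, in particular nonzero. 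If $\Pi$ is linearly independent, this already contradicts the displayed membership.

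The main obstacle is the case where $\Pi$ is linearly dependent, since then ``$\alpha_r$-coefficient'' is not well defined (for instance, $\alpha_s$ and $\alpha_t$ might fail to be independent). I would first try to avoid coefficients altogether and argue with half-spaces. If a root $\delta$ satisfies $\phi(\delta) \in \RR\phi(\alpha_s) + \RR\phi(\alpha_t)$, then the reflection $r_\delta$ should lie in $\gen{s, t}$. By Lemma~\ref{Lemma: W-equivariant bijection} and reducedness, this would force $\delta \in \Phi(\gen{s,t},\{s,t\})$, i.e.\ $\delta = v\alpha_u$ with $v \in \gen{s,t}$ and $u \in \{s,t\}$. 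But $\delta = -rtsts\alpha_r$ is $\pm$ a $W$-translate of $\alpha_r$, and no such root is of that form, since $r$ is not conjugate into $\gen{s,t}$ in the relevant way. If that fails, my fallback is to use the linear form $f$ from \ref{Item: RB3}, or the Tits cone, to separate the vectors directly.
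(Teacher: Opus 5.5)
Your reduction is correct. The plane argument, the translation by $w\inv$ with $w=rststr$, and the use of $ab=2$ to show that $s\alpha_t, t\alpha_s$ span $\RR\alpha_s+\RR\alpha_t$ all work. You reduce to showing $\phi(rtsts\alpha_r)\notin\RR\phi(\alpha_s)+\RR\phi(\alpha_t)$, which would give a stronger, cone-free statement than the paper. The paper instead eliminates $\phi(\beta_n)$ using the nonnegative coefficients, gets $\gamma'\in[\alpha_r,\gamma]_{\lin}^{\mc{B}}\subseteq[\alpha_r,\gamma]$, and uses $\alpha_r\subsetneq\gamma$ to force $\gamma'\subseteq\gamma$, contradicting $o(r_\gamma r_{\gamma'})<\infty$.

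\textbf{First gap: the sign in the decisive step is reversed.} Since $\gen{\alpha_s,\alpha_r^\vee}<0$, $\gen{\alpha_t,\alpha_r^\vee}<0$ and $x,y\geq 0$, the terms $-x\gen{\alpha_s,\alpha_r^\vee}$ and $-y\gen{\alpha_t,\alpha_r^\vee}$ are $\geq 0$. So you only get that the $\alpha_r$-coefficient is $\geq -1$, which does not exclude $0$. It can in fact never be negative: $rtstsr$ is reduced, so $rtsts\alpha_r$ is a positive root. For $\mc{B}(W,S)$ one gets $x=y=2+2\sqrt{2}$ and coefficient $7+4\sqrt{2}$. What you need is a genuine lower bound. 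In your abbreviation, set $p:=-\gen{\alpha_r,\alpha_s^\vee}$ and $q:=-\gen{\alpha_r,\alpha_t^\vee}$. A direct computation using $ab=2$ gives $x=2p-qb$ and $y=2q-pa$. Using $\gen{\alpha_r,\alpha_s^\vee}\gen{\alpha_s,\alpha_r^\vee}=2=\gen{\alpha_r,\alpha_t^\vee}\gen{\alpha_t,\alpha_r^\vee}$, the coefficient becomes $7+qb\gen{\alpha_s,\alpha_r^\vee}+pa\gen{\alpha_t,\alpha_r^\vee}>7$. Alternatively, a root with support in $\{s,t\}$ lies in the parabolic subsystem, so its reflection is conjugate to $s$ or $t$. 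The reflection of $rtsts\alpha_r$ is conjugate to $r$, and $r$ is conjugate to neither because all $m_{uv}$ are even.

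\textbf{Second gap: the linearly dependent case is left open.} Your claim that $\phi(\delta)\in\RR\phi(\alpha_s)+\RR\phi(\alpha_t)$ forces $r_\delta\in\gen{s,t}$ is unproved, and without independence of $\Pi$ it is unclear. This case never occurs, though, and you can show that elementarily.
\begin{enumerate}
\item Two distinct simple roots are always independent, so your worry about $\alpha_s,\alpha_t$ is unfounded. If $\alpha_t=\lambda\alpha_s$, (RB3) gives $\lambda>0$, hence $\gen{\alpha_t,\alpha_s^\vee}=2\lambda>0$, contradicting (RB2).
\item Suppose $\alpha_r=\lambda\alpha_s+\mu\alpha_t$. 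Pairing with $\alpha_r^\vee$ gives $2=\lambda\gen{\alpha_s,\alpha_r^\vee}+\mu\gen{\alpha_t,\alpha_r^\vee}$, so one of $\lambda,\mu$ is negative, and (RB3) forces the other to be positive.
\item If $\lambda<0<\mu$, pairing with $\alpha_t^\vee$ gives $\gen{\alpha_r,\alpha_t^\vee}=\lambda a+2\mu>0$. If $\mu<0<\lambda$, pairing with $\alpha_s^\vee$ gives $\gen{\alpha_r,\alpha_s^\vee}=2\lambda+\mu b>0$. Either contradicts (RB2).
\end{enumerate}
So $\Pi$ is linearly independent in type $(4,4,4)$ and coefficients are well defined. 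With the corrected estimate, your argument is complete.
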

\begin{proof}
	Let $\phi\co \Phi(W, S) \to \Phi(\mc{B})$ be the $W$-equivariant bijection from Lemma~\ref{Lemma: W-equivariant bijection}. Note that there exist $a_r, a_s, a_r \in \RR_{\geq 0}$ with $\phi(\beta_n) = a_r \phi(\alpha_r) + a_s \phi(\alpha_s) + a_t \phi(\alpha_t)$. Assume by contrary that $\{ \gamma, \gamma' \} \subseteq [\alpha_n, \beta_n]_{\lin}^{\mc{B}}$. Then there exist $a_{\gamma}, b_{\gamma}, a_{\gamma'}, b_{\gamma'} \in \RR_{\geq 0}$ such that
	\allowdisplaybreaks
	\begin{align*}
		\phi(\gamma) &= a_{\gamma} \phi(\alpha_r) + b_{\gamma} \phi(\beta_n) = (a_{\gamma} + b_{\gamma} a_r) \phi(\alpha_r) + b_{\gamma} a_s \phi(\alpha_s) + b_{\gamma} a_t \phi(\alpha_t), \\
		\phi(\gamma') &= a_{\gamma'} \phi(\alpha_r) + b_{\gamma'} \phi(\beta_n) = (a_{\gamma'} + b_{\gamma'} a_r) \phi(\alpha_r) + b_{\gamma'} a_s \phi(\alpha_s) + b_{\gamma'} a_t \phi(\alpha_t).
	\end{align*}
	As $\gamma \neq \alpha_r \neq \gamma'$ and as $\mc{B}$ is reduced, we obtain $b_{\gamma}, b_{\gamma'} >0$. For $\delta \in \{ \gamma, \gamma' \}$ we define $c_{\delta} := a_{\delta} + b_{\delta} a_r$. We now distinguish the following two cases:
	\begin{itemize}[label=$\bullet$]
		\item $c_{\gamma} b_{\gamma'} = c_{\gamma'} b_{\gamma}$: In this case we have
		\[ \frac{b_{\gamma'}}{b_{\gamma}} \phi(\gamma) = \frac{c_{\gamma} b_{\gamma'}}{b_{\gamma}} \phi(\alpha_r) + b_{\gamma'} a_s \phi(\alpha_s) + b_{\gamma'} a_t \phi(\alpha_t) = \phi(\gamma'). \]
		As $\mc{B}$ is reduced, we infer $b_{\gamma} = b_{\gamma'}$. But this is a contradiction, as $\phi$ is a bijection and $\gamma \neq \gamma'$.
		
		\item $c_{\gamma} b_{\gamma'} \neq c_{\gamma'} b_{\gamma}$: Without loss of generality we may assume $c_{\gamma} b_{\gamma'} < c_{\gamma'} b_{\gamma}$. This implies $\frac{c_{\gamma} b_{\gamma'}}{b_{\gamma}} < c_{\gamma'}$. We compute the following:
		\allowdisplaybreaks
		\begin{align*}
			\frac{b_{\gamma'}}{b_{\gamma}} \phi(\gamma) + \left( c_{\gamma'} - \frac{c_{\gamma} b_{\gamma'}}{b_{\gamma}} \right) \phi(\alpha_r) &= \frac{c_{\gamma} b_{\gamma'}}{b_{\gamma}} \phi(\alpha_r) + b_{\gamma'} a_s \phi(\alpha_s) + b_{\gamma'} a_t \phi(\alpha_t) + \left( c_{\gamma'} - \frac{c_{\gamma} b_{\gamma'}}{b_{\gamma}} \right) \phi(\alpha_r) \\
			&= \phi(\gamma').
		\end{align*}
		This implies $\gamma' \in [\alpha_r, \gamma]_{\lin}^{\mc{B}} \subseteq [\alpha_r, \gamma]$. It follows from \cite[Remark~4.21]{Bischof:Construction_of_commutator_blueprints} that $\alpha_r \subsetneq \gamma \Leftrightarrow (-\gamma) \subsetneq (-\alpha_r)$. Using Remark~\ref{Remark: intervals}, we infer
		\[ (-\gamma) = (-\gamma) \cap (-\alpha_r) \subseteq (-\gamma'). \]
		This is a contradiction to the fact that $o(r_{\gamma} r_{\gamma'}) < \infty$ (see \cite[Lemma~2.8]{Bischof:Construction_of_commutator_blueprints}). \qedhere
	\end{itemize}
\end{proof}

\begin{lemma}\label{Lemma: RGD systems and roots}
	Let $K \subseteq \NN_{\geq 3}$. Then there exists an RGD~system $\mc{D}(K) = (G, (U_{\alpha})_{\alpha \in \Phi(W, S)})$ of type $(W, S)$ over $\FF_2$ with the following properties:
	\begin{itemize}
		\item $\forall n\in K: [U_{\alpha_n}, U_{\beta_n}] = U_{\gamma} U_{\gamma'}$;
		
		\item $\forall n \in \NN \setminus K: [U_{\alpha_n}, U_{\beta_n}] = \triv$.
	\end{itemize}
\end{lemma}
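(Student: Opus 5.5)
Following the proof of Theorem~\ref{Theorem: RGD systems of universal type}, I would build a suitable Weyl-invariant commutator blueprint of type $(4,4,4)$ over $\FF_2$ and then apply the integrability machinery of \cite{BiRGD}. The difference from the universal case is that we cannot define the blueprint from scratch. For $(4,4,4)$ the commutator blueprints are controlled by the author's earlier construction in \cite{Bischof:Construction_of_commutator_blueprints} and \cite{BiUncountablymanyRGD-systems}, where uncountably many commutator blueprints of type $(4,4,4)$ were produced. These are parametrised by a choice, for each $n$ in some index set, of whether a certain `long-range' commutation relation is trivial or not. So the first step is to recall that construction: for every subset $K \subseteq \NN_{\geq 3}$ it gives a Weyl-invariant commutator blueprint $\mc{M}(K) = (M^H_{\alpha,\beta})_{(H,\alpha,\beta)\in\mc{I}}$. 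I would then identify, for the relevant minimal galleries $H$, the sets $M^H_{\alpha_n,\beta_n}$ explicitly: equal to $\{\gamma,\gamma'\}$ if $n\in K$, and empty otherwise.

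Concretely, I would take a minimal gallery $H$ from $1_W$ to a suitable element of the form $(rstst)^n rst\cdots$, crossing first $\alpha_r = \alpha_n$ and last $\beta_n$. I would then check that $\gamma$ and $\gamma'$ are exactly the two roots that the construction places in $M_{\alpha_n,\beta_n}^H$ when $n \in K$. Both lie in the geometric interval $(\alpha_n,\beta_n)$, which can be verified via Remark~\ref{Remark: intervals}. The restriction $n\geq 3$ is presumably what the construction needs in order to leave the local rank-$2$ relations untouched; for small $n$ the relations are forced, as in the Kac--Moody case. Uncountability then comes from the many choices of $K$, just as before.

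Next, I would invoke the integrability results. Faithfulness would follow from \cite[Remark~2]{BiRGD} or from the corresponding statement in \cite{BiUncountablymanyRGD-systems}, and integrability from \cite[Theorem~A]{BiRGD}. Together these give an RGD~system $\mc{D}(K)$ over $\FF_2$ whose commutation relations are those of the blueprint. Over $\FF_2$, a blueprint entry $M^H_{\alpha,\beta}$ translates to $[u_\alpha,u_\beta]=\prod_{\delta\in M^H_{\alpha,\beta}} u_\delta$, where the product is ordered along $H$. Since root groups have order two, this yields $[U_{\alpha_n},U_{\beta_n}] = U_\gamma U_{\gamma'}$ when $n\in K$, and the trivial group when $M$ is empty.

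The main obstacle will be matching the parametrisation in the earlier construction with the specific pair $(\alpha_n,\beta_n)$ and the specific roots $\gamma,\gamma'$. One has to verify that the earlier theorem really allows the relation for each $n\geq3$ to be switched independently, and that the commutator is \emph{exactly} $u_\gamma u_{\gamma'}$ rather than involving further roots of $(\alpha_n,\beta_n)$. I would handle this by carefully computing the sequence of roots crossed by the gallery of type $(r,s,t,s,t)^n$ and comparing it with the definition of the blueprint in \cite{BiUncountablymanyRGD-systems}.
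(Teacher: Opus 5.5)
Your plan is essentially the paper's proof. The Weyl-invariant commutator blueprint is taken from \cite[Lemma~4.24]{Bischof:Construction_of_commutator_blueprints}, where the matching you flag as the main obstacle amounts to choosing $J_k = \{r\}$ and $L_k^r = \{2\}$ for all $k \in K$, and integrability is then quoted from \cite{BiRGD}.

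The one adjustment is the integrability citation. For type $(4,4,4)$ the paper invokes \cite[Theorem~B]{BiRGD} directly. The route through \cite[Remark~2]{BiRGD} plus \cite[Theorem~A]{BiRGD} is the one used for the universal-type blueprints in Section~\ref{Section: Non-linearizable universal type}, so you should not rely on Remark~2 to get faithfulness here.
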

\begin{proof}
	We use the notation from \cite{Bischof:Construction_of_commutator_blueprints}. We have constructed in \cite[Lemma~$4.24$]{Bischof:Construction_of_commutator_blueprints} the corresponding Weyl-invariant commutator blueprint. To see the correspondence, one chooses $J_k = \{r\}$ and $L_k^r = \{2\}$ for all $k \in K$ in \cite[Lemma~$4.24$]{Bischof:Construction_of_commutator_blueprints}. These commutator blueprints are integrable by \cite[Theorem~B]{BiRGD}. This implies that RGD~systems with the corresponding commutation relations exist.
\end{proof}

\begin{theorem}\label{Theorem: 444 not linear wrt B(W,S)}
	There exist uncountably many RGD~systems of type $(4, 4, 4)$ over $\FF_2$ which are not linearizable.
\end{theorem}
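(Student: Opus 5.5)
The plan mirrors the proof of Corollary~\ref{Corollary: uncountably many non-linearizable}. For each non-empty $K \subseteq \NN_{\geq 3}$, Lemma~\ref{Lemma: RGD systems and roots} gives an RGD~system $\mc{D}(K)$ of type $(4,4,4)$ over $\FF_2$. I will show two things: that each $\mc{D}(K)$ is not linear w.r.t.\ any reduced root basis $\mc{B}$ associated with $(W,S)$, and that the assignment $K \mapsto \mc{D}(K)$ produces uncountably many pairwise distinct (non-isomorphic) RGD~systems. The second point is needed because $\NN_{\geq 3}$ has uncountably many non-empty subsets.

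For non-linearity, fix $n \in K$ and a reduced root basis $\mc{B}$. By Lemma~\ref{Lemma: RGD systems and roots}, $[U_{\alpha_n}, U_{\beta_n}] = U_\gamma U_{\gamma'}$. Each of these root groups has order $2$, so let $u_{\alpha_n}, u_{\beta_n}$ be the non-trivial elements. The commutator $[u_{\alpha_n}, u_{\beta_n}]$ generates this commutator subgroup, so it cannot lie in $U_\gamma$ alone or in $U_{\gamma'}$ alone; otherwise $[U_{\alpha_n},U_{\beta_n}]$, which is generated by this single commutator since all groups are of order $2$, would have order at most $2$. Now list $(\alpha_n,\beta_n) = \{\delta_1,\ldots,\delta_k\}$ in the order of a minimal gallery, so that the multiplication map (\ref{Equation: multiplication map bijection}) is a bijection onto the corresponding $U_w$. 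Write the commutator as $u_1\cdots u_k$ with $u_i \in U_{\delta_i}$. This expression is unique, and it is supported on $\gamma$ and $\gamma'$, where both components are non-trivial. By Lemma~\ref{Lemma: gamma/gamma' not in alphan,betan lin}, at least one of $\gamma,\gamma'$ is not in $[\alpha_n,\beta_n]_{\lin}^{\mc{B}}$, and hence not in $(\alpha_n,\beta_n)_{\lin}^{\mc{B}}$. Lemma~\ref{Lemma: non-linear RGD system} then shows that $\mc{D}(K)$ is not linear w.r.t.\ $\mc{B}$. Since $\mc{B}$ was arbitrary, $\mc{D}(K)$ is not linearizable.

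The main obstacle is this bookkeeping step. I need that $\gamma,\gamma' \in (\alpha_n,\beta_n)$, that some $w$ has $\Phi(W,S)_w$ containing the whole interval, and that the factorization is compatible with the ordering required by (\ref{Equation: multiplication map bijection}). I would justify these points by taking a minimal gallery from $\alpha_n = \alpha_r$ to $\beta_n$ of type $(r,s,t,s,t)^n$ read appropriately. I would then use that the commutator blueprint defining $\mc{D}(K)$ prescribes the commutator as a product over roots in that gallery order, with the $\gamma$- and $\gamma'$-factors being non-trivial.

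For uncountability, take $K \neq K'$ and choose $n$ in their symmetric difference. Then $[U_{\alpha_n},U_{\beta_n}]$ is non-trivial in one system and trivial in the other, so $\mc{D}(K) \neq \mc{D}(K')$ as RGD~systems with their given indexing. If isomorphism up to type-preserving automorphisms of $(W,S)$ is wanted, note that such automorphisms form a finite group (the symmetries of the triangle diagram together with $W$). Each isomorphism class therefore contains only countably many $\mc{D}(K)$, and uncountably many classes remain.
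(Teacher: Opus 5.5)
Your route is the paper's: take $\mc{D}(K)$ from Lemma~\ref{Lemma: RGD systems and roots}, fix $n \in K$, and combine Lemma~\ref{Lemma: gamma/gamma' not in alphan,betan lin} with Lemma~\ref{Lemma: non-linear RGD system}. Your care about $\emptyset \neq K \subseteq \NN_{\geq 3}$, and about different $K$ giving non-isomorphic systems, fills in points the paper's short proof passes over.

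The one argument you add, however, is wrong. You deduce that $c = [u_{\alpha_n}, u_{\beta_n}]$ lies neither in $U_\gamma$ nor in $U_{\gamma'}$, because otherwise $[U_{\alpha_n}, U_{\beta_n}] = \gen{c}$ would have order at most $2$. But $\gen{c}$ is cyclic. By contrast, $U_\gamma U_{\gamma'}$ contains the two distinct subgroups $U_\gamma \neq U_{\gamma'}$ of order $2$ (distinct by (\ref{Equation: multiplication map bijection}), which you use anyway). A cyclic group has at most one subgroup of order $2$. So the equality $[U_{\alpha_n}, U_{\beta_n}] = U_\gamma U_{\gamma'}$ can never hold literally, and your deduction starts from a premise that is never satisfied.

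In fact, for $w = rststr$ one has $w\inv\gamma = s\alpha_t$ and $w\inv\gamma' = t\alpha_s = st\alpha_s$. These are crossed consecutively by the gallery $(1, s, st, sts)$, so $(s\alpha_t, st\alpha_s) = \emptyset$, since every root of this interval separates $s$ from $sts$. Hence $(\gamma, \gamma') = \emptyset$, and $U_\gamma$ and $U_{\gamma'}$ commute by (RGD1). Consequently $[U_{\alpha_n}, U_{\beta_n}] = \{1, u_\gamma u_{\gamma'}\}$ genuinely has order $2$. This is precisely the situation your counting argument claims to rule out. The commutation also disposes of your worry about the ordering of the two factors.

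The fix is to read Lemma~\ref{Lemma: RGD systems and roots} in the commutator-blueprint sense: $[u_{\alpha_n}, u_{\beta_n}] = u_\gamma u_{\gamma'}$ with $u_\gamma \neq 1 \neq u_{\gamma'}$. This is what the paper uses tacitly when it invokes Lemma~\ref{Lemma: non-linear RGD system}, and it is the fallback you mention in your ``main obstacle'' paragraph. Replace the counting argument by this reading, and the rest of your non-linearity argument goes through unchanged.

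There is also a minor slip in your last paragraph. The group generated by $W$ and the diagram symmetries is countable, not finite. Countability is all your orbit-counting argument needs, so the conclusion stands.
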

\begin{proof}
	Let $K \subseteq \NN$ and let $\mc{D}(K)$ be the RGD~system of type $(W, S)$ from Theorem~\ref{Lemma: RGD systems and roots}. Let $\mc{B}$ be a reduced root basis associated with $(W, S)$. Then Lemma~\ref{Lemma: gamma/gamma' not in alphan,betan lin} implies that $\{ \gamma, \gamma' \} \not\subseteq [\alpha_n, \beta_n]_{\lin}^{\mc{B}}$. Now Lemma~\ref{Lemma: non-linear RGD system} yields that $\mc{D}(K)$ is not linear w.r.t.\ $\mc{B}$. We conclude that there are uncountably many RGD~systems of type $(4, 4, 4)$ which are not linearizable.
\end{proof}

\bibliographystyle{amsalpha}
\bibliography{references}

\end{document}